\theoremstyle{plain}
\newtheorem{theorem}{Theorem}
\newtheorem{thm}[theorem]{Theorem}
\newtheorem{lemma}[theorem]{Lemma}
\newtheorem{cor}[theorem]{Corollary}
\newtheorem{prop}[theorem]{Proposition}
\newtheorem{exam}{Example}
\newtheorem{ques}{Question}
\theoremstyle{remark}
\newcommand{\dd}{difference diagonal}
\newcommand{\sd}{sum diagonal}
\newcommand{\q}[2]{\mbox{$Q_{#1\times#2}$}}
\newcommand{\qq}{\mbox{$Q_{m \times n}$}}
\newcommand{\gamqq}{\mbox{$\gamma(\qq)$}}
\newcommand{\beem}[1]{\mbox{$\ ( \bmod\  #1)$}}
\newcommand{\xy}{\mbox{($x$,$y$)}}
\newcommand{\vsm}{\vspace{.1in}}
\newcommand{\mds}{minimum dominating set}
\newcommand{\zah}{\mathbb{Z}}
\title{\bf Domination of the \\ rectangular queen's graph}
\author{S\'andor Boz\'oki\\
\small Institute for Computer Science and \\[-0.8ex]
\small Control (SZTAKI), Budapest, Hungary;\\[-0.8ex]
\small Corvinus University of Budapest\\[-0.8ex]
\small\tt bozoki.sandor@sztaki.hu\\
\and
P\'eter G\'al\\
\small Kofax-Recognita Zrt., Budapest, Hungary\\[-0.8ex]
\small\tt galpetya@gmail.com\\
\and
Istv\'an Marosi\\
\small Kofax-Recognita Zrt., Budapest, Hungary\\[-0.8ex]
\small\tt imarosi@gmail.com\\
\and
William D. Weakley\\
\small Purdue University Fort Wayne, USA\\[-0.8ex]
\small\tt weakley@pfw.edu\\
}
\date{\vspace{-5ex}}
\begin{document}

\maketitle


\begin{abstract}
  The queens graph $Q_{m \times n}$ has the squares of the $m \times n$ chessboard as its vertices;
  two squares are adjacent if they are in
  the same row, column, or diagonal of the board.  A set $D$  of squares of $Q_{m \times n}$ is a \emph{dominating set}
  for $Q_{m \times n}$ if every square of $Q_{m \times n}$ is either in $D$
  or adjacent to a square in $D$. The minimum size of a dominating set
  of $Q_{m \times n}$ is the \emph{domination number}, denoted by $\gamma(Q_{m \times n})$.

Values of $\gamma(Q_{m \times n}), \, 4 \leq m  \leq n \leq  18,
\,$ are given here, in each case with a f{\kern0pt}ile of minimum
dominating sets (often all of them, up to symmetry) in an online
\href{https://www.combinatorics.org/ojs/index.php/eljc/article/view/v26i4p45/HTML}{appendix}.
In these ranges for $m$ and $n$, monotonicity fails once:
$\gamma(Q_{8 \times 11}) = 6 > 5 = \gamma(Q_{9 \times 11})
 = \gamma(Q_{10 \times 11}) = \gamma(Q_{11 \times 11})$.

Let $g(m)$ [respectively $g^{*}(m)$] be the largest integer such that $m$ queens suf{\kern0pt}f{\kern0pt}ice
to dominate the $(m+1) \times g(m)$ board [respectively, to dominate the $(m+1) \times g^{*}(m)$
 board with no two queens in a row].
Starting from the elementary bound $g(m) \leq 3m$, domination when the board is far
from square is investigated.  It is shown (Theorem \ref{by3k}) that $g(m) = 3m$ can only occur when
$m \equiv 0, 1, 2, 3, \mbox{or } 4 \mbox{ (mod 9)}$, with an online appendix showing that
this does occur for $m \leq 40, m \neq 3$. Also (Theorem \ref{upperbounds}),
if $m \equiv 5, 6, \mbox{or } 7 \mbox{ (mod 9)}$ then $g^{*}(m) \leq 3m-2$,
and if $m \equiv 8 \mbox{ (mod 9)}$ then $g^{*}(m) \leq 3m-4$.   It is shown that equality holds in these
bounds for $m \leq 40 $.

Lower bounds on $\gamma(Q_{m \times n})$ are given.
In particular, if $m \leq n$   then $\gamma(Q_{m \times n}) \geq \min \{ m, \lceil (m+n-2)/4 \rceil \}$.

Two types of dominating sets (orthodox covers and centrally strong sets) are developed; each type is
shown to give good upper bounds of $\gamma(Q_{m \times n})$ in several cases.

Three questions are posed: whether monotonicity of $\gamma(\q{m}{n})$ holds (other than
from $(m, n) = (8, 11)$ to $(9, 11)$), whether $\gamma(\q{m}{n}) = (m+n-2)/4$ occurs with $m \leq n < 3m+2$
(other than for $(m, n) = (3, 3)$ and $(11, 11)$), and whether the lower
bound given above can be improved.

 A set of squares is \emph{independent} if no two of its squares are adjacent.
 The minimum size of an independent dominating set of $Q_{m \times n}$ is the \emph{independent domination number},
denoted by $i(Q_{m \times n})$. Values of $i(Q_{m \times n}), \, 4 \leq m  \leq n \leq  18, \,$
 are given  here,  in each case with some  minimum dominating sets.
In these ranges for $m$ and $n$, monotonicity fails twice:
$i(Q_{8 \times 11}) = 6 > 5 = i(Q_{9 \times 11}) = i(Q_{10 \times 11}) = i(Q_{11 \times 11})$,
and $i(Q_{11 \times 18}) = 9 > 8 = i(Q_{12 \times 18})$.

  \bigskip\noindent \textbf{Keywords:}  chessboard combinatorics, queen's graph,
  domination, covering problems \\
  \small {\textbf{Mathematics Subject Classifications:} 05C69, 05C99}
\end{abstract}

\section{Introduction}

Let $m$ and $n$ be positive integers. We will identify the $m \times n$ chessboard with a rectangle in the
Cartesian plane, having sides parallel to the coordinate axes.  We place the board so that the center of every
square has integer coordinates, and refer to each square by the coordinates \xy\ of its center.  Unless
otherwise noted, squares have edge length one, and the board is placed so that the lower left corner
has center $(1, 1)$; sometimes it is more convenient to use squares of edge length two
or to place the board with its center at the origin of the coordinate system.  By symmetry it suf{\kern0pt}f{\kern0pt}ices to
consider the case $m \leq n$, which we will assume throughout: the board has at least as many columns as rows.

The square \xy\ is in
{\em column} $x$ and {\em row} $y$.
Columns and rows will be referred to
collectively as {\em orthogonals}. The
{\em dif{\kern0pt}ference diagonal\/} (respectively
{\em sum diagonal\/}) through square \xy\
is the set of all board squares with centers on the
line of slope +1 (respectively $-1$) through the
point \xy. The value of $y-x$ is the same for each square \xy\
on a dif{\kern0pt}ference diagonal, and we will refer to the diagonal by this value.
Similarly, the value of $y+x$ is the same for each
square on a sum diagonal, and we associate this value to the diagonal.
Orthogonals and diagonals are collectively referred to as \!\!\emph{ lines} of the board.
\vsm

The queens graph \qq\ has the squares of the $m \times n$ chessboard as its vertices; two squares are
adjacent if they are in some line of \qq. A set $D$ of squares of \qq\ is a {\em dominating set}
for \qq\ if every square of \qq\ is either in $D$ or adjacent to a square in $D$. The minimum size of a
dominating set is the \emph{domination number}, denoted by $\gamma(\qq)$.
A set of squares is {\em independent} if no two squares in the set are adjacent.  \vsm

Almost all previous work on queen domination has concerned square boards. The problem of f{\kern0pt}inding
values of $\gamma(\q{n}{n})$ has interested mathematicians for over 150 years. The f{\kern0pt}irst published work
is that of De Jaenisch \cite{JA} in 1862, who gave minimum dominating sets and minimum independent
dominating sets of $Q_{n \times n}$ for $n \leq 8$.  His work was brief{\kern0pt}ly summarized by Rouse Ball \cite{RB} in 1892,
who considered several other questions about queen domination. In 1901, W. Ahrens \cite[Chapter X]{AH1} gave
minimum dominating sets for \q{9}{9}, and in 1902-3, K. von Szily \cite{V1, V2} gave minimum dominating sets
of \q{n}{n} for $10 \leq n \leq 13$ and $n = 17$.  Proof that these sets were
minimum had to wait for later work, described below. De Jaenisch, Ahrens, and von
Szily also worked extensively to f{\kern0pt}ind the number of dif{\kern0pt}ferent minimum dominating sets for each $n$, often
giving lists with one set from each symmetry class. Many of these results were collected by Ahrens in
the 1910 edition \cite{AH} of his book, and can also be found in its later editions.

 More detail and some examples from recent work on domination of $Q_{n \times n}$ can be found in
\cite{Weakley2018}.

The f{\kern0pt}irst published work on nonsquare boards of which we are aware is in
Watkins \cite{Watkins2004}:  the values  $\gamma(Q_{5 \times 12}) = 4$
and $\gamma(Q_{6 \times 10}) = 4$
(see Problem 8.4 on p.~132 and Figure 8.19 on p.~137), found by D.~C.~Fisher. \vsm

Say that two \mds s of \gamqq\ are
\emph{equivalent} if there
is an isometry of the $m \times n$ chessboard that carries one to the other.

We have computed \gamqq\ for  rectangular chessboards with $4 \leq m \leq n \leq 18$. \linebreak
Results are given in Table 1; for most $m$ and $n$ we give a f{\kern0pt}ile of minimum dominating \linebreak
sets with  one from every equivalence class, unless the number of equivalence classes is large.
The online appendix at \\
\url{https://www.combinatorics.org/ojs/index.php/eljc/article/view/v26i4p45/HTML} \linebreak
includes the computational results. For each set, we describe its symmetry and say whether it can be
obtained by one of the constructions in Section \ref{construct}.

\begin{center}
\begin{tabular}{|c||c|c|c|c|c|c|c|c|c|c|c|c|c|c|c|}
\hline
 $n \diagdown m$ & \hspace*{0.1mm} 4 \hspace*{0.1mm}
                 & \hspace*{0.1mm} 5 \hspace*{0.1mm}
                 & \hspace*{0.1mm} 6 \hspace*{0.1mm}
                 & \hspace*{0.1mm} 7 \hspace*{0.1mm}
                 & \hspace*{0.1mm} 8 \hspace*{0.1mm}
                 & \hspace*{0.1mm} 9 \hspace*{0.1mm} & 10 & 11 & 12 & 13 & 14 & 15 & 16 & 17 & 18  \\
\hline
\hline
               4 &  \href{https://www.combinatorics.org/files/v26i4p45/04x04_2Q.html}{\emph{2}} & & & & & & & & & & & & & &   \\
\hline
               5 &  \href{https://www.combinatorics.org/files/v26i4p45/04x05_2Q.html}{2} &
                    \href{https://www.combinatorics.org/files/v26i4p45/05x05_3Q.html}{\emph{3}} & & & & & & & & & & & & &  \\
\hline
               6 &  \href{https://www.combinatorics.org/files/v26i4p45/04x06_3Q.html}{3} &
                    \href{https://www.combinatorics.org/files/v26i4p45/05x06_3Q.html}{3} &
                    \href{https://www.combinatorics.org/files/v26i4p45/06x06_3Q.html}{\emph{3}} & & & & & & & & & & & & \\
\hline
               7 &  \href{https://www.combinatorics.org/files/v26i4p45/04x07_3Q.html}{3} &
                    \href{https://www.combinatorics.org/files/v26i4p45/05x07_3Q.html}{3} &
                    \href{https://www.combinatorics.org/files/v26i4p45/06x07_4Q.html}{4} &
                    \href{https://www.combinatorics.org/files/v26i4p45/07x07_4Q.html}{\emph{4}} & & & & & & & & & & & \\
\hline
               8 &  \href{https://www.combinatorics.org/files/v26i4p45/04x08_3Q.html}{3} &
                    \href{https://www.combinatorics.org/files/v26i4p45/05x08_4Q.html}{4} &
                    \href{https://www.combinatorics.org/files/v26i4p45/06x08_4Q.html}{4} &
                    \href{https://www.combinatorics.org/files/v26i4p45/07x08_5Q.html}{5} &
                    \href{https://www.combinatorics.org/files/v26i4p45/08x08_5Q.html}{\emph{5}} & & & & & & & & & & \\
\hline
               9 &  {4} &
                    \href{https://www.combinatorics.org/files/v26i4p45/05x09_4Q.html}{4} &
                    \href{https://www.combinatorics.org/files/v26i4p45/06x09_4Q.html}{4} &
                    \href{https://www.combinatorics.org/files/v26i4p45/07x09_5Q.html}{5} &
                    \href{https://www.combinatorics.org/files/v26i4p45/08x09_5Q.html}{5} &
                    \href{https://www.combinatorics.org/files/v26i4p45/09x09_5Q.html}{\emph{5}}  & & & & & & & & & \\
\hline
              10 &  {4} &
                    \href{https://www.combinatorics.org/files/v26i4p45/05x10_4Q.html}{4} &
                    \href{https://www.combinatorics.org/files/v26i4p45/06x10_4Q.html}{4} &
                    \href{https://www.combinatorics.org/files/v26i4p45/07x10_5Q.html}{5} &
                    \href{https://www.combinatorics.org/files/v26i4p45/08x10_5Q.html}{5} &
                    \href{https://www.combinatorics.org/files/v26i4p45/09x10_5Q.html}{5} &
                    \href{https://www.combinatorics.org/files/v26i4p45/10x10_5Q.html}{\emph{5}}  & & & & & & & & \\
\hline
              11 &  {4} &
                    \href{https://www.combinatorics.org/files/v26i4p45/05x11_4Q.html}{4} &
                    \href{https://www.combinatorics.org/files/v26i4p45/06x11_5Q.html}{5} &
                    \href{https://www.combinatorics.org/files/v26i4p45/07x11_5Q.html}{5} &
                    \href{https://www.combinatorics.org/files/v26i4p45/08x11_6Q.html}{\textbf{6}} &
                    \href{https://www.combinatorics.org/files/v26i4p45/09x11_5Q.html}{5} &
                    \href{https://www.combinatorics.org/files/v26i4p45/10x11_5Q.html}{5} &
                    \href{https://www.combinatorics.org/files/v26i4p45/11x11_5Q.html}{\emph{5}}   & & & & & &  &\\
\hline
              12 &  {4} &
                    \href{https://www.combinatorics.org/files/v26i4p45/05x12_4Q.html}{4} &
                    \href{https://www.combinatorics.org/files/v26i4p45/06x12_5Q.html}{5} &
                    \href{https://www.combinatorics.org/files/v26i4p45/07x12_5Q.html}{5} &
                    \href{https://www.combinatorics.org/files/v26i4p45/08x12_6Q.html}{6}      &
                    \href{https://www.combinatorics.org/files/v26i4p45/09x12_6Q.html}{6} &
                    \href{https://www.combinatorics.org/files/v26i4p45/10x12_6Q.html}{6}  &
                    \href{https://www.combinatorics.org/files/v26i4p45/11x12_6Q.html}{6}  &
                    \href{https://www.combinatorics.org/files/v26i4p45/12x12_6Q.html}{\emph{6}}  & & & & & &  \\
\hline
              13 &  4 &
                    {5}  &
                    \href{https://www.combinatorics.org/files/v26i4p45/06x13_5Q.html}{5} &
                    \href{https://www.combinatorics.org/files/v26i4p45/07x13_6Q.html}{6}  &
                    \href{https://www.combinatorics.org/files/v26i4p45/08x13_6Q.html}{6} &
                    \href{https://www.combinatorics.org/files/v26i4p45/09x13_6Q.html}{6} &
                    \href{https://www.combinatorics.org/files/v26i4p45/10x13_7Q.html}{7}  &
                    \href{https://www.combinatorics.org/files/v26i4p45/11x13_7Q.html}{7}  &
                    \href{https://www.combinatorics.org/files/v26i4p45/12x13_7Q.html}{7}   &
                    \href{https://www.combinatorics.org/files/v26i4p45/13x13_7Q.html}{\emph{7}} & & & &  &  \\
\hline
              14 &  4 &
                    {5} &
                    {6} &
                    \href{https://www.combinatorics.org/files/v26i4p45/07x14_6Q.html}{6}  &
                    \href{https://www.combinatorics.org/files/v26i4p45/08x14_6Q.html}{6} &
                    \href{https://www.combinatorics.org/files/v26i4p45/09x14_6Q.html}{6} &
                    \href{https://www.combinatorics.org/files/v26i4p45/10x14_7Q.html}{7}  &
                    \href{https://www.combinatorics.org/files/v26i4p45/11x14_7Q.html}{7}   &
                    \href{https://www.combinatorics.org/files/v26i4p45/12x14_8Q.html}{8}    &
                    \href{https://www.combinatorics.org/files/v26i4p45/13x14_8Q.html}{8}   &
                    \href{https://www.combinatorics.org/files/v26i4p45/14x14_8Q.html}{\emph{8}} & & &  &             \\
\hline
              15 &  4 &
                    {5} &
                    {6} &
                    \href{https://www.combinatorics.org/files/v26i4p45/07x15_6Q.html}{6} &
                    \href{https://www.combinatorics.org/files/v26i4p45/08x15_6Q.html}{6} &
                    \href{https://www.combinatorics.org/files/v26i4p45/09x15_7Q.html}{7} &
                    \href{https://www.combinatorics.org/files/v26i4p45/10x15_7Q.html}{7} &
                    \href{https://www.combinatorics.org/files/v26i4p45/11x15_7Q.html}{7} &
                    \href{https://www.combinatorics.org/files/v26i4p45/12x15_8Q.html}{8} &
                    \href{https://www.combinatorics.org/files/v26i4p45/13x15_8Q.html}{8} &
                    \href{https://www.combinatorics.org/files/v26i4p45/14x15_8Q.html}{8} &
                    \href{https://www.combinatorics.org/files/v26i4p45/15x15_9Q.html}{\emph{9}} & & &             \\
\hline
              16 &  4 &
                    5 &
                    {6} &
                    \href{https://www.combinatorics.org/files/v26i4p45/07x16_6Q.html}{6} &
                    \href{https://www.combinatorics.org/files/v26i4p45/08x16_7Q.html}{7} &
                    \href{https://www.combinatorics.org/files/v26i4p45/09x16_7Q.html}{7} &
                    \href{https://www.combinatorics.org/files/v26i4p45/10x16_7Q.html}{7} &
                    \href{https://www.combinatorics.org/files/v26i4p45/11x16_8Q.html}{8} &
                    \href{https://www.combinatorics.org/files/v26i4p45/12x16_8Q.html}{8} &
                    \href{https://www.combinatorics.org/files/v26i4p45/13x16_8Q.html}{8} &
                    \href{https://www.combinatorics.org/files/v26i4p45/14x16_9Q.html}{9} &
                    \href{https://www.combinatorics.org/files/v26i4p45/15x16_9Q.html}{9} &
                    \href{https://www.combinatorics.org/files/v26i4p45/16x16_9Q.html}{\emph{9}} &  &           \\
\hline
              17 &  4 &
                    5 &
                    6 &
                    {7} &
                    \href{https://www.combinatorics.org/files/v26i4p45/08x17_7Q.html}{7} &
                    \href{https://www.combinatorics.org/files/v26i4p45/09x17_7Q.html}{7}  &
                    \href{https://www.combinatorics.org/files/v26i4p45/10x17_8Q.html}{8} &
                    \href{https://www.combinatorics.org/files/v26i4p45/11x17_8Q.html}{8} &
                    \href{https://www.combinatorics.org/files/v26i4p45/12x17_8Q.html}{8} &
                    \href{https://www.combinatorics.org/files/v26i4p45/13x17_9Q.html}{9} &
                    \href{https://www.combinatorics.org/files/v26i4p45/14x17_9Q.html}{9} &
                    \href{https://www.combinatorics.org/files/v26i4p45/15x17_9Q.html}{9} &
                    \href{https://www.combinatorics.org/files/v26i4p45/16x17_9Q.html}{9} &
                    \href{https://www.combinatorics.org/files/v26i4p45/17x17_9Q.html}{\emph{9}}  &            \\
\hline
              18 &  4 &
                    5 &
                    6 &
                    {7} &
                    \href{https://www.combinatorics.org/files/v26i4p45/08x18_7Q.html}{7} &
                    \href{https://www.combinatorics.org/files/v26i4p45/09x18_8Q.html}{8} &
                    \href{https://www.combinatorics.org/files/v26i4p45/10x18_8Q.html}{8} &
                    \href{https://www.combinatorics.org/files/v26i4p45/11x18_8Q.html}{8} &
                    \href{https://www.combinatorics.org/files/v26i4p45/12x18_8Q.html}{8} &
                    \href{https://www.combinatorics.org/files/v26i4p45/13x18_9Q.html}{9} &
                    \href{https://www.combinatorics.org/files/v26i4p45/14x18_9Q.html}{9} &
                    \href{https://www.combinatorics.org/files/v26i4p45/15x18_9Q.html}{9} &
                    \href{https://www.combinatorics.org/files/v26i4p45/16x18_9Q.html}{9} &
                    \href{https://www.combinatorics.org/files/v26i4p45/17x18_9Q.html}{9} &
                    \href{https://www.combinatorics.org/files/v26i4p45/18x18_9Q.html}{\emph{9}}              \\

\hline
\end{tabular} \\[3mm]
Table 1: Values of $\gamma(Q_{m \times n}), \, 4 \leq m  \leq n \leq  18 $  (\href{http://oeis.org/A274138}{OEIS A274138})
\end{center}

The computation was done with a backtracking algorithm.
The backtrack condition minimizes the number of queens placed.
If a solution is found with $k$ queens, then the
remaining search space is limited to at most $k-1$ queens.
The algorithm places a single queen in a position covering
the top left cell and does a recursive call to cover all remaining cells.
Some heuristics are used also to {f}{i}nd the {f}{i}rst solution faster:
the {f}{i}rst queen is placed in the middle of the board
(actually in the closest to middle position attacking the top left
cell); other possible attacking positions are only tried later.
Frequently this position is part of a minimal solution.

Once it is shown that there is no solution with $k-1$ queens,
a search for other solutions with $k$ queens is made.

Cockayne \cite[Problem 1]{CO} introduced monotonicity
\[
\gamma(Q_{n \times n}) \stackrel{?}{\leq}  \gamma(Q_{(n+1) \times (n+1)})
\]
as an open problem (see also in Chartrand, Haynes, Henning and Zhang \cite[Conjecture 1.2.1 on page 7]{ChartrandHaynesHenningZhang2019}.

A remarkable observation about \q{8}{11}: six queens (with bold
typeface in Table 1) are necessary to dominate it, though f{\kern0pt}ive
queens are suf{\kern0pt}f{\kern0pt}icient (and necessary) to
dominate each of \q{9}{11}, \q{10}{11}, \q{11}{11}. A possible
explanation for this is given later. We note that  f{\kern0pt}ive queens can
cover all but one square of {\q{8}{11}}. One of the
\href{https://www.combinatorics.org/files/v26i4p45/08x11_5Qalmost.html}{8}
placements is in Figure 1.

\begin{figure} \label{fig8x11almost}
\begin{center}
\begin{picture}(120,170)
\put(-70,-40){\resizebox{90mm}{!}{\rotatebox{0}{\includegraphics{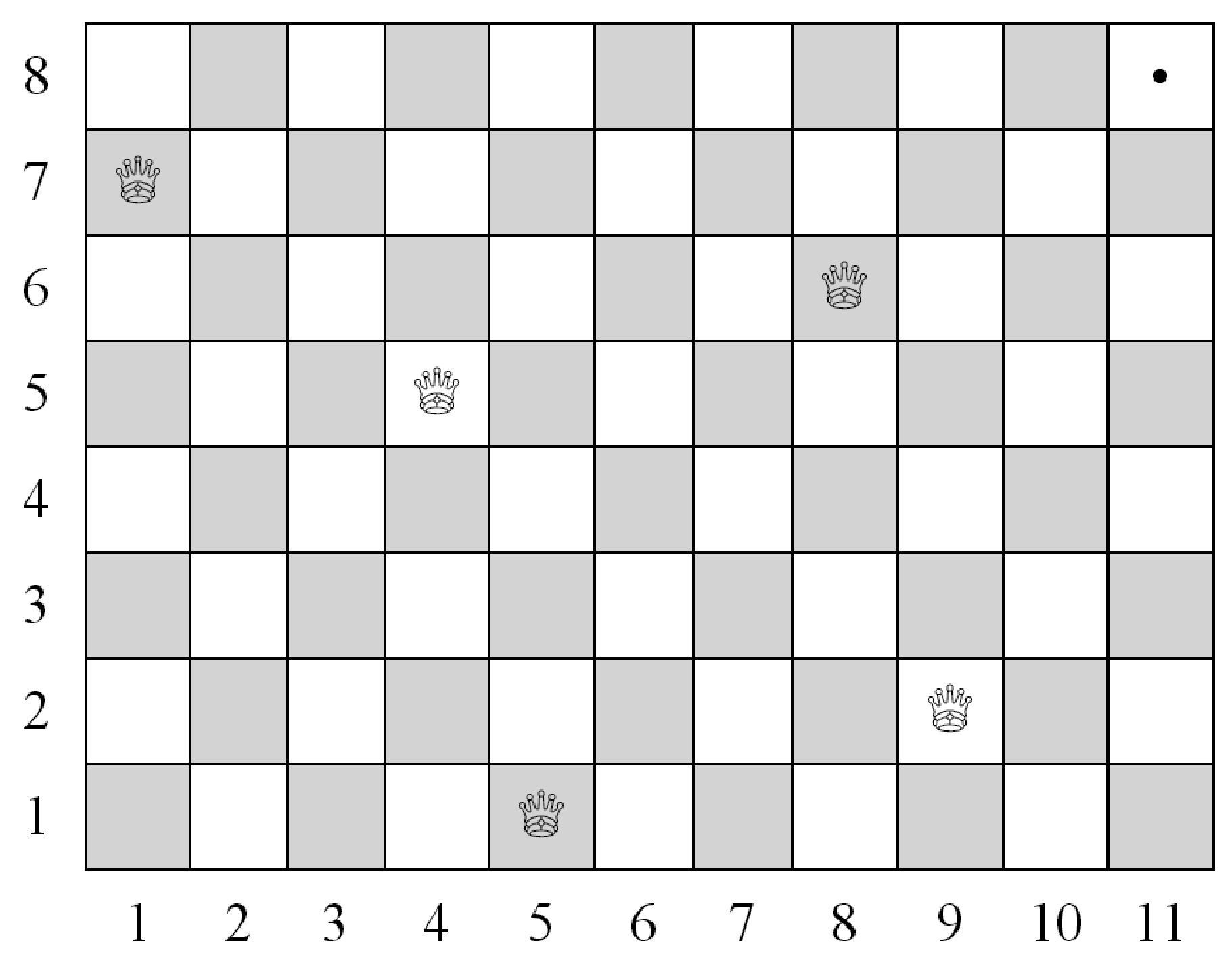}}}}
\end{picture}\\[12mm]
\end{center}
\caption{Five queens dominate $Q_{8 \times 11}$ except for one square (\hspace*{-0.2mm}$\bullet$)}
\end{figure}

We extend Cockayne's question to the rectangular case.

\begin{ques} \label{cockayne}
Column-wise monotonicity:  Does $\gamqq \leq \gamma(\q{m}{(n+1)})$ hold for $m \leq n$?

Row-wise monotonicity: Does $\gamqq \leq \gamma(\q{(m+1)}{n})$ hold for $m \leq n, (m, n) \neq (8, 11)$?
\end{ques}

We discuss one type of internal symmetry of \mds s that frequently occurs.
A \emph{foursome} is a set of four squares $(x+a, y+b)$, $(x-a, y-b)$, $(x-b, y+a)$, $(x+b, y-a)$,
where either each of $x, y, a, b$ is an integer or each is half an odd integer, and $a$ and $b$ are unequal and nonzero.
The  \emph{center}  of the foursome is the point $(x, y)$, which need not be a square
center.  For examples, see Figure 1 above, the f{\kern0pt}irst \mds s given for $Q_{9 \times 9}$ and $Q_{11 \times
11}$, as well as the f{\kern0pt}irst four \mds s given for $Q_{11 \times 12}$.

If a foursome $F$ is f{\kern0pt}lipped across any of the four lines through its center, the result is another
foursome $F'$ that occupies the same lines as $F$; this is illustrated in Figure 2.

\begin{figure}[h] \label{fig4some}
\unitlength 1mm
\begin{center}
\begin{picture}(80,70)
\put(3,-10){\resizebox{70mm}{!}{\rotatebox{0}{\includegraphics{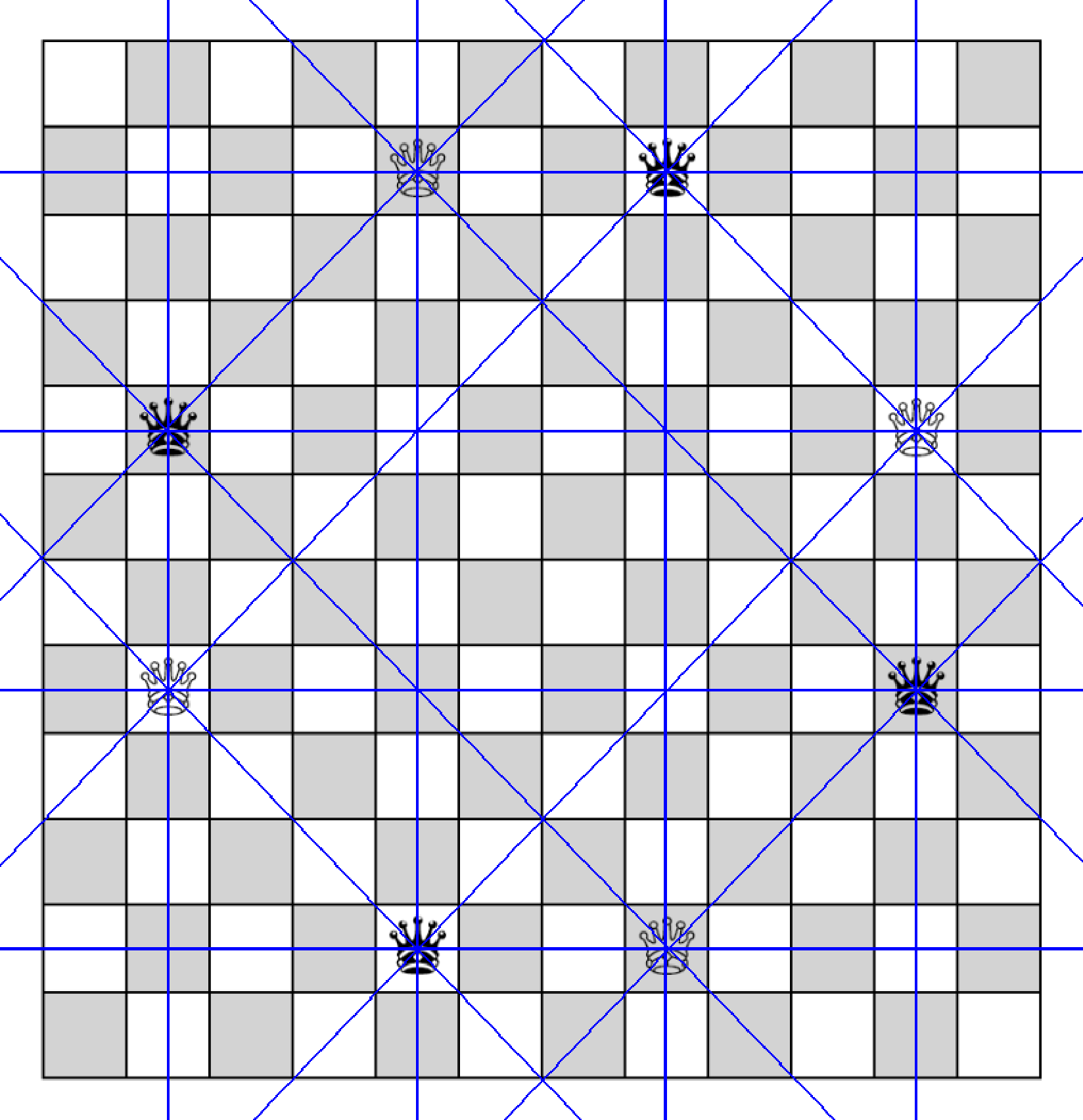}}}}
\end{picture}\\[12mm]
\end{center}
\setlength{\unitlength}{0.6mm}
\caption{The four squares with white queens are a foursome; the lines through its squares are shown.
Ref{\kern0pt}lecting this foursome across any of the four lines through its
center gives another foursome (the squares with black queens) that
occupies the same lines.}

\end{figure}

Thus if a dominating set $D$ of $Q_{m \times n}$ contains $F$, we may replace $F$ in $D$ with $F'$ and
obtain a dominating set $D'$ of the same size as $D$.  Usually $D$ and $D'$ are not equivalent.

As an example, we analyze the \mds s of $Q_{11 \times 17}$, which have size 8.
Up to equivalence there are 131 solutions, shown in the
\href{https://www.combinatorics.org/files/v26i4p45/11x17_8Q.html}{f{\kern0pt}ile}.
Of these,  85 have no foursomes, 41 have exactly one foursome, four
(\#\href{https://www.combinatorics.org/files/v26i4p45/11x17_8Q.html\#Solution125}{125},
\href{https://www.combinatorics.org/files/v26i4p45/11x17_8Q.html\#Solution69}{69},
\href{https://www.combinatorics.org/files/v26i4p45/11x17_8Q.html\#Solution70}{70},
\href{https://www.combinatorics.org/files/v26i4p45/11x17_8Q.html\#Solution62}{62})
have  exactly   two foursomes, and one
(\#\href{https://www.combinatorics.org/files/v26i4p45/11x17_8Q.html\#Solution76}{76})
has 3 foursomes.

We may def{\kern0pt}ine a relation on the set ${\cal S}(11, 17)$ of minimum
dominating sets of \q{11}{17} by saying that two sets are related if either they are equivalent, or
f{\kern0pt}lipping a foursome of the f{\kern0pt}irst set yields a
set equivalent to the second set. This relation is ref{\kern0pt}lexive and symmetric, and its transitive closure gives a
partition ${\cal P}(11, 17)$ of ${\cal S}(11, 17)$, which may also be regarded
as a partition of the set of (isometric) equivalence classes, as we will do.

For example, solution
\#\href{https://www.combinatorics.org/files/v26i4p45/11x17_8Q.html\#Solution125}{125}
has two foursomes: one centered at $(12,6)$ with $(a, b) = (4, 2)$,  and one centered at $(9, 7)$
with $(a, b) = (3, -1)$.  Flipping the f{\kern0pt}irst gives solution
\#\href{https://www.combinatorics.org/files/v26i4p45/11x17_8Q.html\#Solution124}{124}.
If instead we
f{\kern0pt}lip the second, we get the ref{\kern0pt}lection of
\#\href{https://www.combinatorics.org/files/v26i4p45/11x17_8Q.html\#Solution125}{125}
across its vertical line of symmetry.
This implies that one part of the partition ${\cal P}(11, 17)$ contains just the equivalence classes of
\href{https://www.combinatorics.org/files/v26i4p45/11x17_8Q.html\#Solution125}{125}
and
\href{https://www.combinatorics.org/files/v26i4p45/11x17_8Q.html\#Solution124}{124},
and we denote this part by
\{\href{https://www.combinatorics.org/files/v26i4p45/11x17_8Q.html\#Solution125}{125},
\href{https://www.combinatorics.org/files/v26i4p45/11x17_8Q.html\#Solution124}{124}\}.
It is then straightforward to see that ${\cal P}(11, 17)$ has 85 parts with one member and  20  parts with two members:
\{\href{https://www.combinatorics.org/files/v26i4p45/11x17_8Q.html\#Solution8}{8},
\href{https://www.combinatorics.org/files/v26i4p45/11x17_8Q.html\#Solution9}{9}\},
\{\href{https://www.combinatorics.org/files/v26i4p45/11x17_8Q.html\#Solution20}{20},
\href{https://www.combinatorics.org/files/v26i4p45/11x17_8Q.html\#Solution24}{24}\}
\{\href{https://www.combinatorics.org/files/v26i4p45/11x17_8Q.html\#Solution19}{19},
\href{https://www.combinatorics.org/files/v26i4p45/11x17_8Q.html\#Solution23}{23}\},
\{\href{https://www.combinatorics.org/files/v26i4p45/11x17_8Q.html\#Solution21}{21},
\href{https://www.combinatorics.org/files/v26i4p45/11x17_8Q.html\#Solution25}{25}\},
\{\href{https://www.combinatorics.org/files/v26i4p45/11x17_8Q.html\#Solution22}{22},
\href{https://www.combinatorics.org/files/v26i4p45/11x17_8Q.html\#Solution26}{26}\},
\{\href{https://www.combinatorics.org/files/v26i4p45/11x17_8Q.html\#Solution15}{15},
\href{https://www.combinatorics.org/files/v26i4p45/11x17_8Q.html\#Solution27}{27}\},
\{\href{https://www.combinatorics.org/files/v26i4p45/11x17_8Q.html\#Solution13}{13},
\href{https://www.combinatorics.org/files/v26i4p45/11x17_8Q.html\#Solution6}{6}\},
\{\href{https://www.combinatorics.org/files/v26i4p45/11x17_8Q.html\#Solution12}{12},
\href{https://www.combinatorics.org/files/v26i4p45/11x17_8Q.html\#Solution5}{5}\},
\{\href{https://www.combinatorics.org/files/v26i4p45/11x17_8Q.html\#Solution128}{128},
\href{https://www.combinatorics.org/files/v26i4p45/11x17_8Q.html\#Solution129}{129}\},
\{\href{https://www.combinatorics.org/files/v26i4p45/11x17_8Q.html\#Solution73}{73},
\href{https://www.combinatorics.org/files/v26i4p45/11x17_8Q.html\#Solution68}{68}\},
\{\href{https://www.combinatorics.org/files/v26i4p45/11x17_8Q.html\#Solution127}{127},
\href{https://www.combinatorics.org/files/v26i4p45/11x17_8Q.html\#Solution130}{130}\},
\{\href{https://www.combinatorics.org/files/v26i4p45/11x17_8Q.html\#Solution126}{126},
\href{https://www.combinatorics.org/files/v26i4p45/11x17_8Q.html\#Solution131}{131}\},
\{\href{https://www.combinatorics.org/files/v26i4p45/11x17_8Q.html\#Solution103}{103},
\href{https://www.combinatorics.org/files/v26i4p45/11x17_8Q.html\#Solution97}{97}\},
\{\href{https://www.combinatorics.org/files/v26i4p45/11x17_8Q.html\#Solution125}{125},
\href{https://www.combinatorics.org/files/v26i4p45/11x17_8Q.html\#Solution124}{124}\},
\{\href{https://www.combinatorics.org/files/v26i4p45/11x17_8Q.html\#Solution39}{39},
\href{https://www.combinatorics.org/files/v26i4p45/11x17_8Q.html\#Solution44}{44}\},
\{\href{https://www.combinatorics.org/files/v26i4p45/11x17_8Q.html\#Solution96}{96},
\href{https://www.combinatorics.org/files/v26i4p45/11x17_8Q.html\#Solution95}{95}\},
\{\href{https://www.combinatorics.org/files/v26i4p45/11x17_8Q.html\#Solution63}{63},
\href{https://www.combinatorics.org/files/v26i4p45/11x17_8Q.html\#Solution62}{62}\},
\{\href{https://www.combinatorics.org/files/v26i4p45/11x17_8Q.html\#Solution72}{72},
\href{https://www.combinatorics.org/files/v26i4p45/11x17_8Q.html\#Solution70}{70}\},
\{\href{https://www.combinatorics.org/files/v26i4p45/11x17_8Q.html\#Solution101}{101},
\href{https://www.combinatorics.org/files/v26i4p45/11x17_8Q.html\#Solution77}{77}\},
\{\href{https://www.combinatorics.org/files/v26i4p45/11x17_8Q.html\#Solution100}{100},
\href{https://www.combinatorics.org/files/v26i4p45/11x17_8Q.html\#Solution78}{78}\}.
There are also two parts with three members:
\{\href{https://www.combinatorics.org/files/v26i4p45/11x17_8Q.html\#Solution80}{80},
\href{https://www.combinatorics.org/files/v26i4p45/11x17_8Q.html\#Solution71}{71},
\href{https://www.combinatorics.org/files/v26i4p45/11x17_8Q.html\#Solution69}{69}\} and
\{\href{https://www.combinatorics.org/files/v26i4p45/11x17_8Q.html\#Solution79}{79},
\href{https://www.combinatorics.org/files/v26i4p45/11x17_8Q.html\#Solution76}{76},
\href{https://www.combinatorics.org/files/v26i4p45/11x17_8Q.html\#Solution75}{75}\}.

It would be possible to reduce the size of the
\href{https://www.combinatorics.org/ojs/index.php/eljc/article/view/v26i4p45/HTML}{appendix}
by giving for each $(m, n)$ one solution
from each part of the partition ${\cal P}(m, n)$ rather than one
solution from each isometry equivalence class.
But when two solutions dif{\kern0pt}fer by the f{\kern0pt}lip of a foursome, it is not clear
which is most useful to see, so we have not done this.

\section{Lower bounds on queen domination numbers}

We begin by looking at what happens when the board is far from square.

\begin{prop} \label{prop1}
If $n \geq 3m-2$, then $ \gamma(\q{m}{n}) = m.$
\end{prop}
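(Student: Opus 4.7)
The plan is to prove both inequalities, and the upper bound is essentially free while the lower bound is the content.

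For the upper bound $\gamma(Q_{m\times n})\leq m$, I would exhibit the trivial construction: place one queen in each row, for instance at $(i,i)$ for $1\leq i\leq m$ (this fits since $m\leq n$). Every square of the board lies in some row, and each such row contains a queen, so the queen in that row dominates the square via the row-line. Hence $m$ queens suffice. (This step uses no hypothesis on how large $n$ is.)

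For the lower bound $\gamma(Q_{m\times n})\geq m$ under $n\geq 3m-2$, I would argue by contradiction. Suppose $D$ is a dominating set with $|D|\leq m-1$. Since there are $m$ rows and at most $m-1$ queens, by the pigeonhole principle there is some row $r$ containing no queen of $D$. Every one of the $n$ squares in row $r$ must then be dominated by a queen lying in a different row. The key observation is that a single queen at a square $(x',y')$ with $y'\neq r$ attacks at most three squares of row $r$: one through its column (the square $(x',r)$) and at most one through each of its two diagonals (the squares $(x'\pm(r-y'),r)$, when those lie on the board). Summing over the at most $m-1$ queens gives at most $3(m-1)=3m-3$ covered squares in row $r$. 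But row $r$ has $n\geq 3m-2>3m-3$ squares, contradicting the fact that $D$ dominates the board.

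Combining the two bounds yields $\gamma(Q_{m\times n})=m$ whenever $n\geq 3m-2$.

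The main obstacle is essentially cosmetic: one has to make sure the count of ``at most three squares of row $r$ attacked by an outside queen'' is stated cleanly, noting that the column contribution and the two diagonal contributions land on distinct squares (since they differ in column coordinate unless $y'=r$, which is excluded). No geometric subtlety beyond that is required, so the proof is short; the hypothesis $n\geq 3m-2$ is used only in the final inequality $n>3(m-1)$.
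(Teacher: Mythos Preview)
Your proof is correct and follows essentially the same argument as the paper: the upper bound via one queen per row, and the lower bound by observing that a queen covers at most three squares of any other row, so $m-1$ queens leave an uncovered square in some queenless row when $n\geq 3m-2$. The paper's version is terser but identical in substance.
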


\begin{proof}
Each queen attacks all squares in her own row, but at most
three squares in any other row. Thus $m-1$ queens occupy at most
$m-1$ rows and cover at most $3(m-1)$ squares in any row that
does not contain a queen. On the other hand, $m$ queens are
certainly suf{\kern0pt}f{\kern0pt}icient.
\end{proof}

Note that $\gamma(\q{3}{6}) = 2$ (see the set given immediately after Theorem \ref{by3k}) and
$\gamma(\q{5}{12}) = 4$ (see the database), but as shown by our computations,
for $m = 4, 6, 7$, \gamqq\ reaches $m$ before $n$ reaches $3m-2$.

We change viewpoint slightly, focusing on the size of the dominating
set rather than the dimensions of the board.  For each positive integer $m$, let
$g(m)$ be the largest integer such that $m$ queens can cover the $(m+1) \times g(m)$
board.  (Proposition \ref{prop1} asserts $g(m) \leq 3m$.)
Let $g^{*}(m)$ be the largest integer such that $m$ queens, no two in a
row, can cover the $(m+1) \times g^{*}(m)$ board.

\begin{thm} \label{by3k}
If $g(m) = 3m$ then  $m \equiv 0, 1, 2, 3, \mbox{or } 4 \mbox{  (mod $9$)}$.

Take the board to be a rectangle in the Cartesian plane with sides parallel to
the axes, board center at the origin, and squares of edge length two.

Assume that $D$ is a dominating set of size $m$ for the $(m+1) \times 3m$ board.  One of two
cases occurs:

(1) there is only one empty row, which without loss of generality has index $h$, $0 \leq h \leq
m$, and each other row contains exactly one square of $D$, or

(2) there are exactly two empty rows; in this case $m$ is even, row 0 contains two squares of $D$, the empty
rows are indexed $\pm h$ for some $h$, $0 < h \leq m$, and each other row contains exactly one square of $D$.

In either case:\\
if $m \equiv 0 \mbox{ or } 4 \mbox{ (mod $9$)}$, then $h \equiv 0 \mbox{(mod $3$)}$; \\
if $m \equiv 1 \mbox{ or } 3 \mbox{ (mod $9$)}$, then $h \not \equiv 0 \mbox{ (mod $3$)}$.
\end{thm}

\begin{proof}
The $3m$ column indices are $x = 1-3m, 3-3m, \ldots, 3m-3, 3m-1$
and the $m+1$ row indices are $y = -m, 2-m, \ldots, m-2, m$.

Let $D = \{ (x_{i}, y_{i})\mbox{ : }1 \leq i \leq m\}$ be a
dominating set of the $(m+1) \times 3m$-board. Since $|D| = m$ and there are $m+1$ rows, at least one row contains
no square of $D$.  Let $h$ be the index of such a row.  Since $|D| = m$, and a queen covers at most three squares of any
line she does not occupy, the $3m$ squares of row $h$
are each covered exactly once by $D$.  For each $(x, y) \in D$, the squares of
row $h$ covered by $(x, y)$ are $(x-(y-h), h), (x, h), (x+(y-h), h)$. We can add
up the squares of the column indices of the squares in row $h$ in two ways,
giving the equation
\[
(1-3m)^{2} + (3-3m)^{2} + \ldots +
(3m-1)^{2} =
\sum_{i=1}^{m} \left[ (x_{i}-(y_{i}-h))^{2} + x_{i}^{2} + (x_{i}+(y_{i}-h))^{2}
\right].
\]
This reduces to
\begin{equation} \label{longeq2}
2 \binom{3m+1}{3} = 3 \sum_{i=1}^{m} x_{i}^{2}
+ 2 \sum_{i=1}^{m} y_{i}^{2} - 4h \sum_{i=1}^{m} y_{i} + 2mh^{2}.
\end{equation}
For a particular dominating set $D$ we may regard this as a quadratic equation in $h$,
so there are at most two empty rows.

If there are two empty rows $h_{1}, h_{2}$, then (\ref{longeq2}) implies
$2mh_{1}^{2} - 4h_{1}\sum_{i=1}^{m} y_{i} = 2mh_{2}^{2} - 4h_{2}\sum_{i=1}^{m} y_{i}$ and then
\begin{equation} \label{h1ph2}
  \sum_{i=1}^{m} y_{i} = \frac{m(h_{1}+h_{2})}{2}.
\end{equation}
As $|D| = m$, there is exactly one row, say $l$, with two queens, and all rows
except $h_{1}, h_{2}, l$ have just one queen. Thus $\sum_{i=1}^{m} y_{i} =
-h_{1}-h_{2}+l$. With (\ref{h1ph2}) this implies $-h_{1}-h_{2}+l = m(h_{1}+h_{2})/2$
and then $l = (m+2)(h_{1}+h_{2})/2$.  From $-m \leq l \leq m$ we have $-2m \leq (m+2)(h_{1}+h_{2}) \leq 2m$
so $-2 < h_{1}+h_{2} < 2$. But $h_{1}+h_{2}$
is even since all row indices have the same parity. Thus $h_{1}+h_{2} = 0$, so $l = 0$ and all row indices
are even, which implies $m$ is even.  So there is $h$, $0 < h \leq m$, such that the empty rows are $\pm h$.
Here $\sum_{i=1}^{m} y_{i} = 0$ and $\sum_{i=1}^{m} y_{i}^{2} = 2 \binom{m+2}{3} -
2h^{2}$, and (\ref{longeq2}) becomes
\begin{equation} \label{2empty}
  2 \binom{3m+1}{3} - 4 \binom{m+2}{3} = 3 \sum_{i=1}^{m} x_{i}^{2} + 2(m-2)h^{2}.
\end{equation}
If instead there is only one empty row $h$, then we may assume $0 \leq h \leq m$ by f{\kern0pt}lipping across
the $x$-axis if necessary.  Then  $\sum_{i=1}^{m} y_{i} = -h$ and
$\sum_{i=1}^{m} y_{i}^{2} = 2 \binom{m+2}{3} - h^{2}$, and (\ref{longeq2}) gives
\begin{equation} \label{1empty}
  2 \binom{3m+1}{3} - 4 \binom{m+2}{3} = 3 \sum_{i=1}^{m} x_{i}^{2} + 2(m+1)h^{2}.
\end{equation}

The left sides of (\ref{2empty}) and (\ref{1empty}) reduce to
$m(25m^{2}-6m-7)/3$.  Multiplying either of (\ref{2empty}) and (\ref{1empty}) by 3
and reducing modulo 9 gives the congruence
\begin{equation} \label{maincong}
  m(25m^{2}-6m-7) \equiv -3(m-2)h^{2} {\mbox{ (mod 9)}}.
\end{equation}
For $m \equiv {5, 6, 7} \mbox{ or } {8} \mbox{  (mod 9)}$, (\ref{maincong}) leads
to $h^{2} \equiv -1 \mbox{ (mod 3)}$ or another impossibility.  For
$m \equiv 0 \mbox{ or } 4 \mbox{  (mod 9)}$, (\ref{maincong}) implies
$h^{2} \equiv 0 \mbox{ (mod 3)}$ and thus $h \equiv 0 \mbox{ (mod 3)}$.
For $m \equiv 1 \mbox{ or } 3 \mbox{  (mod 9)}$, (\ref{maincong}) gives
$h^{2} \equiv 1 \mbox{ (mod 3)}$, so $h \equiv \pm 1 \mbox{ (mod 3)}$.
(For $m \equiv 2 \mbox{  (mod 9)}$, (\ref{maincong}) is satisf{\kern0pt}ied for any $h$.)
\end{proof}

Only one example of the second case of Theorem \ref{by3k} is known: for $m=2$,
the set $D = \{ (-3, 0), (3, 0)\}$ covers the $3 \times 6$ board.  A computer
search shows that there is no other example with $m \leq 40$. It seems likely
that no other example exists; if  for some $m$ there is such a set
$D = \{ (x_{i}, y_{i}) \mbox{ : }1 \leq i \leq m\}$, we have been able to show
that
$0 = \sum x_{i} = \sum x_{i}y_{i} =  \sum x_{i}^{2}y_{i} = \sum (x_{i}^{3}y_{i}+2x_{i}y_{i}^{2})$.

By computer search (see the \href{https://www.combinatorics.org/files/v26i4p45/appendix-Theorem2.html}{f{\kern0pt}ile}) we have shown that for all $m$ such that $m \leq 40$, $m \neq 3$, and
$m \equiv 0, 1, 2, 3, \mbox{or } 4 \mbox{  (mod 9)}$,
there exist sets of $m$ queen squares dominating $Q_{(m+1)\times 3m}$, with $h$ taking all
values not ruled out by Theorem \ref{by3k}.   We believe $g(m) = g^{*}(m) = 3m$ holds for all $m \neq 3$
with $m \equiv 0, 1, 2, 3, \mbox{or } 4 \mbox{  (mod 9)}$.

The following is immediate from Theorem \ref{by3k}.

\begin{cor}
  If $m \equiv 5, 6, 7, \mbox{or } 8  \mbox{  (mod $9$)}$ then $\gamma(Q_{(m+1) \times 3m}) = m+1$.
\end{cor}

We now examine the cases $m \equiv 5, 6, 7, 8 \mbox{  (mod $9$)}$.

\begin{thm} \label{upperbounds}
  If $m \equiv 5, 6, \mbox{or } 7 \mbox{ (mod $9$)}$ then $g^{*}(m) \leq 3m-2$.\\
  If $m \equiv 8 \mbox{ (mod $9$)}$ then $g^{*}(m) \leq 3m-4$.

  Take the board to be a rectangle in the Cartesian plane with sides parallel to
  the axes, board center at the origin, and squares of edge length two.

  For  $m \equiv 6 \mbox{ or } 7 \mbox{ (mod $9$)}$,  assume that $D$ is a set of $m$ squares that dominates
  the $(m+1) \times (3m-2)$ board, occupying all but row $h$.  If $m \equiv 6 \mbox{ (mod $9$)}$ then
  $h \not \equiv 0 \mbox{ (mod $3$)}$.
  If $m \equiv 7 \mbox{ (mod $9$)}$ then $h \equiv 0 \mbox{ (mod $3$)}$.
\end{thm}

\begin{proof}

Let $m$ be a positive integer and $j$ an integer with $0 \leq j \leq 4$. Let $D$ be a dominating set of $m$ queens
for the $(m+1) \times (3m-j)$-board, with only row $h$ empty.

The $3m-j$ column indices form the set $S_{\rm col} = \{ j+1-3m, j+3-3m, \ldots, 3m-j-3, 3m-j-1\}$ and the $m+1$ row
indices form the set $S_{\rm row} = \{-m, 2-m, \ldots, m-2, m \}$.

It will be useful to consider row $h$ extended beyond the board, and to look at the congruence
classes modulo 3 of the column indices.  To this end, for each integer $i$, let
$C'_{i} = \{x \in \zah \mbox{ : } x \equiv i \mbox{ (mod 3)} \}$.  The
restriction of $C'_{i}$ to column indices of the $(m+1) \times (3m-j)$-board is $C_{i} = C'_{i} \cap
S_{\rm col}$. We write $c_{i}$ for the size of $C_{i}$.  By symmetry, $C_{-1} = \{-x \mbox{ : } x \in C_{1}\} = -C_{1}$
so $c_{-1} = c_{1}$.

For each integer $i$ let $R_{i} = \{y \in S_{\rm row} \mbox{ : } y \equiv i \mbox{ (mod 3)}
\}$.  Write $r_{i}$ for the size of $R_{i}$.  By symmetry, $R_{-1} = - R_{1}$ so $r_{-1} = r_{1}$.

\begin{lemma} \label{techlemma}
  Choose $s, b \in \{ -1, 0, 1 \}$ such that $s \equiv j$ and $b \equiv k+1 \mbox{ (mod 3)}$.
  Then $c_{-1} = c_{1} = c_{0} + s$ and  $r_{-1} = r_{1} = r_{0} - b$.

\end{lemma}

\begin{proof} The facts that $c_{-1} = c_{1}$, $c_{-1} + c_{0} + c_{1} = 3m-j$ and the $c_{i}$'s dif{\kern0pt}fer by at most one
  imply the f{\kern0pt}irst equation, and the second is similar.
\end{proof}

Let $p$ be the number of squares $(x, y)$ of $D$ with $y \not \equiv h \mbox{ (mod 3)}$.  For each $i \in \{ -1, 0, 1 \}$, let
$t_{i}$ be the number of squares $(x, y)$ of $D$ such that $x \equiv i \mbox{ (mod 3)}$ and $y \equiv h \mbox{ (mod 3)}$.
Let $t = t_{-1} + t_{0} + t_{1}$. Thus $|D| = p + t_{-1} + t_{0} + t_{1} = p + t$.

Let $(x, y)$ be a square of $D$.
The squares in the extension of row $h$ covered by $(x, y)$ are $(x-(y-h), h), (x, h), (x+(y-h), h)$,
some of which may be of{\kern0pt}f the $(m+1) \times (3m-j)$-board.
Their $x$-coordinates $x - (y-h), x, x+(y-h)$ are an arithmetic progression with
dif{\kern0pt}ference $y-h$.  If $y \equiv h \mbox{ (mod 3)}$ then all of $x - (y-h), x, x+(y-h)$
are in $C'_{x}$.
If $y \not \equiv h \mbox{ (mod 3)}$ then the three values $x - (y-h), x, x+(y-h)$ are
dif{\kern0pt}ferent modulo 3, so they contribute one member to each of $C'_{-1}, C'_{0},
C'_{1}$.  Thus for each $i \in \{ -1, 0, 1\}$, $p + 3t_{i}$
is the number of covers (with multiplicity) of squares $(x, h)$ with $x \in
C'_{i}$.

For each $i \in \{ -1, 0, 1 \}$ let $a_{i}$ be the number of ``wasted covers''
by $D$ of squares $(x, h)$ with $x \in C'_{i}$.  That is, $a_{i}$ counts every cover of any square $(x, h)$
that is of{\kern0pt}f the board ($|x| > 3m-j-1$) and all but one cover of each multiply
covered square $(x,h)$ on the board.  Thus each $a_{i}$ is nonnegative, and since each of the $m$
squares of $D$ covers 3 squares of the extended row $h$,
$a_{-1} + a_{0} + a_{1} = j$.

Since each square in row $h$ of the board is covered by $D$, we get a system of equations:
\begin{eqnarray} 
  p + 3t_{-1} & = & c_{-1} + a_{-1} \label{one} \\
  p + 3t_{0} & = & c_{0} + a_{0}  \label{two} \\
  p + 3t_{1} & = & c_{1} + a_{1} \label{three} \\
  a_{-1} + a_{0} + a_{1} & = & j \label{four}.
\end{eqnarray}

As $c_{-1} = c_{1}$,
subtracting (\ref{one}) from (\ref{three}) shows
\begin{equation} \label{cong1}
  a_{-1} \equiv a_{1} \mbox{ (mod 3)}.
\end{equation}


For a dominating set $D$ as hypothesized to exist, it is necessary that
the total number $t$ of squares $(x,y)$ in $D$ with $y \equiv h \mbox{ (mod 3)}$
is one less than the number $r_{h}$ of rows in $R_{h}$.
When $t = r_{h} - 1$, we will say $h$ is \emph{eligible} for $t$.
\vspace{.1in}



Theorem \ref{by3k} covers the case $j = 0$, so
we pass to less wide boards, only considering $m \equiv 5, 6, 7, 8 \beem{9}$.

\noindent Let $j=1$. From (\ref{cong1}) we have $a_{-1}=a_{1}=0$ and then $a_{0}=1$.
Here the $s$ of Lemma \ref{techlemma} is 1 so $c_{-1} = c_{1} = m, c_{0} = m-1$.
Then equations (\ref{one}-\ref{three}) imply $t_{-1} = t_{0} = t_{1}$, so $t \equiv 0 \mbox{ (mod
3)}$, and the following analysis shows that no $h$ is eligible for any $t$ for $m \equiv 5, 6, 7, 8 \beem{9}$.

\indent If $m \equiv 5 \beem{9}$ then $r_{-1} = r_{1} = r_{0} = (m+1)/3 \equiv -1 \beem{3}$.\\
\indent If $m \equiv 6 \beem{9}$ then $r_{-1} = r_{1} = m/3 \equiv -1 \beem{3}, r_{0} = (m/3)+1 \equiv 0 \beem{3}$.\\
\indent If $m \equiv 7 \beem{9}$ then $r_{-1} = r_{1} = (m+2)/3 \equiv 0 \beem{3}, r_{0} = (m-1)/3 \equiv -1 \beem{3}$.\\
\indent If $m \equiv 8 \beem{9}$ then $r_{-1} = r_{1} = r_{0} = (m+1)/3 \equiv 0 \beem{3}$.

\noindent Let $j=2$. Since (\ref{four}) here implies all $a_{i} \leq 2$, (\ref{cong1}) gives
$a_{-1} = a_{1}$, and then (\ref{one}-\ref{three}) imply $t_{-1} = t_{1}$.  The $s$
of Lemma \ref{techlemma} is $-1$ so $c_{-1} = c_{1} = m, c_{0} = m+1$. There are
two possibilities:

$(a_{-1}, a_{0}, a_{1}) = (0, 2, 0)$, when equations (\ref{one}-\ref{three})
imply $t_{0} = t_{1}+1$, so $t \equiv 1 \beem{3}$, or

$(a_{-1}, a_{0}, a_{1}) = (1, 0, 1)$, when equations (\ref{one}-\ref{three})
imply $t_{0} = t_{1}$, so $t \equiv 0 \beem{3}$.

Then examining the values of the $r_{i}$'s found above for $m \equiv 5, 6, 7, 8
\beem{9}$, we see:\\ For $m \equiv 5 \beem{9}$, all $h$ are eligible for $t \equiv 1 \beem{3}$ and none for
$t \equiv 0 \beem{3}$;\\
For $m \equiv 6 \beem{9}$, $h \not \equiv 0 \beem{3}$ are eligible for $t \equiv 1 \beem{3}$ and none for
$t \equiv 0 \beem{3}$;\\
For $m \equiv 7 \beem{9}$, $h \equiv 0 \beem{3}$ is eligible for $t \equiv 1 \beem{3}$ and none for
$t \equiv 0 \beem{3}$;\\
For $m \equiv 8 \beem{9}$, no $h$ is eligible for either $t$.

We continue with $m \equiv 8 \beem{9}$.

\noindent Let $j=3$.  Here $c_{-1} = c_{0} = c_{1} = m-1$ so from equations (\ref{one}-\ref{three})
we see all $a_{i}$'s are congruent modulo 3. Either $(a_{-1}, a_{0}, a_{1}) = (1, 1, 1)$,
and then $t_{-1} = t_{0}= t_{1}$, so $t \equiv 0 \beem{3}$, or one of the
$a_{i}$'s is 3 and the other two are zero, which gives $t \equiv 1 \beem{3}$.
For $m \equiv 8 \beem{9}$, neither of these gives an eligible $h$, as before.

\noindent Let $j=4$. Here there are more possibilities for $(a_{-1}, a_{0},
a_{1})$, but the only helpful one for $m \equiv 8 \beem{9}$ is $(2, 0, 2)$, which
gives $t \equiv -1 \beem{3}$, with all $h$ eligible.
\end{proof}

Computer search reveals (see the
\href{https://www.combinatorics.org/files/v26i4p45/appendix-Theorem4.html}{f{\kern0pt}ile})
that for $m \leq 40$ and $ m \equiv 5,6,7 \beem{9},$
all minimum dominating sets of \q{m+1}{3m-2}  have just one empty row,
and all eligible values of $h$ actually occur. For $ m \equiv 8 \beem{9},$
Theorem 4  does not say any $h$ are ineligible; indeed, our search has found solutions
with one empty row for all the $h$ values.
The only board size in the $3 \leq m \leq 40$ range where the minimum dominating sets found have
two empty rows is \q{9}{20}. Those dominating sets demonstrate numerous
patterns of pairs of empty rows, as shown in this
\href{https://www.combinatorics.org/files/v26i4p45/appendix-9x20Spec.html}{f{\kern0pt}ile}.\\


We next show that by ``pasting together'' dominating sets of a certain type, we can
extend the range of values for which the bounds of Theorem \ref{upperbounds} are
known to be exact.

Say that a \emph{topless} dominating set for the $(m+1)\times n$
board is a dominating set of size $m$  having one square in each row
except the top row, which is empty.

\begin{prop} \label{pasting}
  (A) Suppose that for $i = 1, 2$ there is a topless dominating set
  of $m_{i}$
  queens for the $(m_{i}+1) \times n_{i}$ board. Then there is a dominating set
  of $m_{1} + m_{2}$ queens for the $(m_{1}+m_{2}+1) \times (n_{1}+n_{2})$
  board.

  (B) Let $k$ be a positive integer. Suppose that for each $l$, $1 \leq l \leq k$, and for $i = 0, 1, 2$, there exist topless
  dominating sets of size $9l+i$ for the $(9l+i+1) \times (27l + 3i)$ board.
  Then for each $m \neq 3$, $1 \leq m \leq 9k+8$, there is a dominating set of
  size $m$ for the $(m+1) \times (3m-j)$ board, where $j=0$ if
  $m \equiv 0, 1, 2, 3, \mbox{or } 4 \mbox{  (mod $9$)}$,
  $j=2$ if $m \equiv 5, 6, \mbox{or } 7 \mbox{  (mod $9$)}$, and $j=4$ if
  $m \equiv 8 \mbox{  (mod $9$)}$.

\end{prop}

\begin{proof}  We return to our usual scheme of indexing columns and
  rows from the bottom left board corner.
  For (A): For $i = 1, 2$, let $S_{i}$ be a topless dominating set of $m_{i}$
  queens for the $(m_{i}+1) \times n_{i}$ board.
  On the $(m_{1}+m_{2}+1) \times (n_{1}+n_{2})$ board, the squares in which the columns
  indexed $1$ to $n_{1}$ and the rows indexed $1$ to $m_{1}+1$ meet
 form a copy of the $(m_{1}+1)\times n_{1}$ board.  Place a copy $S_{1}'$ of $S_{1}$
  on that.  The squares in which the columns
  indexed $n_{1}+1$ to $n_{1}+n_{2}$ and the rows indexed $m_{1}+1$ to $m_{1}+m_{2}+1$ meet
  form a copy of the $(m_{2}+1)\times n_{2}$ board.  Place a copy $S_{2}'$ of $S_{2}$, rotated
  by a half turn, on that.  The union of the two copies is a set $S_{1} \oplus
  S_{2}$ of  $m_{1}+m_{2}$ squares that leaves only row $m_{2}+1$ empty on the $(m_{1}+m_{2}+1) \times (n_{1}+n_{2})$
  board.  As $S_{1}'$ covers the left $n_{1}$ squares of that row and $S_{2}'$
  covers the remainder, $S_{1} \oplus S_{2}$ dominates the $(m_{1}+m_{2}+1) \times (n_{1}+n_{2})$
  board.

  For (B): We will use topless dominating sets of the $(m+1) \times (3m-j)$ board
  for $(m,j)$ =
  \href{https://www.combinatorics.org/files/v26i4p45/appendix-Theorem2.html#m1h1}{(1,0)},
  \href{https://www.combinatorics.org/files/v26i4p45/appendix-Theorem4.html#m5h5}{(5,2)},
  \href{https://www.combinatorics.org/files/v26i4p45/appendix-Theorem4.html#m8h8}{(8,4)}, \mbox{and} \hspace{.03in}
  \href{https://www.combinatorics.org/files/v26i4p45/appendix-Theorem2.html#m11h11}{(11,0)}.
  Assume the hypotheses; we then need to prove the existence of (minimum) dominating sets of size
  $m$ for the $(m+1) \times (3m-j)$ boards in the ranges claimed.
  For $m < 18$ the database (see the f{\kern0pt}iles
  \href{https://www.combinatorics.org/files/v26i4p45/appendix-Theorem2.html}{here}
  and
  \href{https://www.combinatorics.org/files/v26i4p45/appendix-Theorem4.html}{here})
   contain the claimed dominating sets. For other board sizes of $m = 9l, 9l + 1, 9l + 2, \, 2 \leq l \leq k$,
  we are assuming there are such sets (in fact, topless).
  For $m = 9l + 3$, we use part (A) to ``paste together" topless sets for $m = 9l + 2$ and $m = 1$.
  For $m = 9l + 4$, we paste together topless sets for $m = 9(l - 1) + 2$ and $m = 11$.
  Pasting together a topless set for each of $m = 9l, 9l + 1, 9l + 2$ with a topless set for $m = 5$ gives
  the result for $m = 9l + 5, 9l + 6, 9l + 7$.
  Finally, pasting together topless sets for $m = 9l$ and $m = 8$ gives the result for $m = 9l + 8$.
\end{proof}

 As mentioned after Theorem \ref{by3k}, computer search gave topless dominating sets as
 required in Proposition \ref{pasting}(B)
 for $k = 4$.
 It follows that $g^{*}(m)$ equals the bound of Theorem \ref{upperbounds}
 for $m \leq 44, m \neq 3$.

\vspace{.1in}

We next develop a lower bound for $\gamma(Q_{m \times n})$ for more general $m, n$.

Raghavan and Venketesan \cite{RV} and Spencer \cite{CO,WE} independently proved that
\begin{equation} \label{rvs}
\gamma(\q{n}{n}) \geq \left \lceil \frac{n-1}{2} \right \rceil.
\end{equation}

It has been shown \cite{FW} that $\gamma(\q{n}{n}) = (n-1)/2$ only for $n = 3, 11$.  Both of these values
are signif{\kern0pt}icant for our work here, as we now discuss.

A central queen on \q{3}{3}\ shows $\gamma(\q{3}{3}) = 1$. This simple fact has a useful generalization: if $C$ is
a central sub-board of \qq\ such that every square of \qq\ has a line meeting $C$, then a subset of $C$ that
occupies all those lines is a dominating set of \qq.  More than a hundred years ago, Szily \cite{V1, V2} gave
dominating sets of this type for \q{13}{13}\ and \q{17}{17}, which were later shown to be minimum. We found
that \q{13}{16} has a minimum dominating set (solution
\href{https://www.combinatorics.org/files/v26i4p45/13x16_8Q.html\#Solution23}{\#23}
in the database) of this \emph{centrally strong}  form
and have also used this idea to produce good upper bounds of \qq\ for some $m, n$, as shown below. \vsm

It follows from \cite{LB} that there are exactly two minimum
dominating sets for \q{11}{11}. Placing the origin of our
coordinate system at board center, these sets are ${\cal D} = \{ (0, 0)$,
$\pm(2, 4)$, $\pm(4, -2) \}$  (see Figure 3) and the ref{\kern0pt}lection of ${\cal D}$ across the
column $x = 0$.
\begin{figure}[h] \label{figq11}
\unitlength 1mm
\begin{center}
\begin{picture}(100,70)
\put(17,-5){\resizebox{70mm}{!}{\rotatebox{0}{\includegraphics{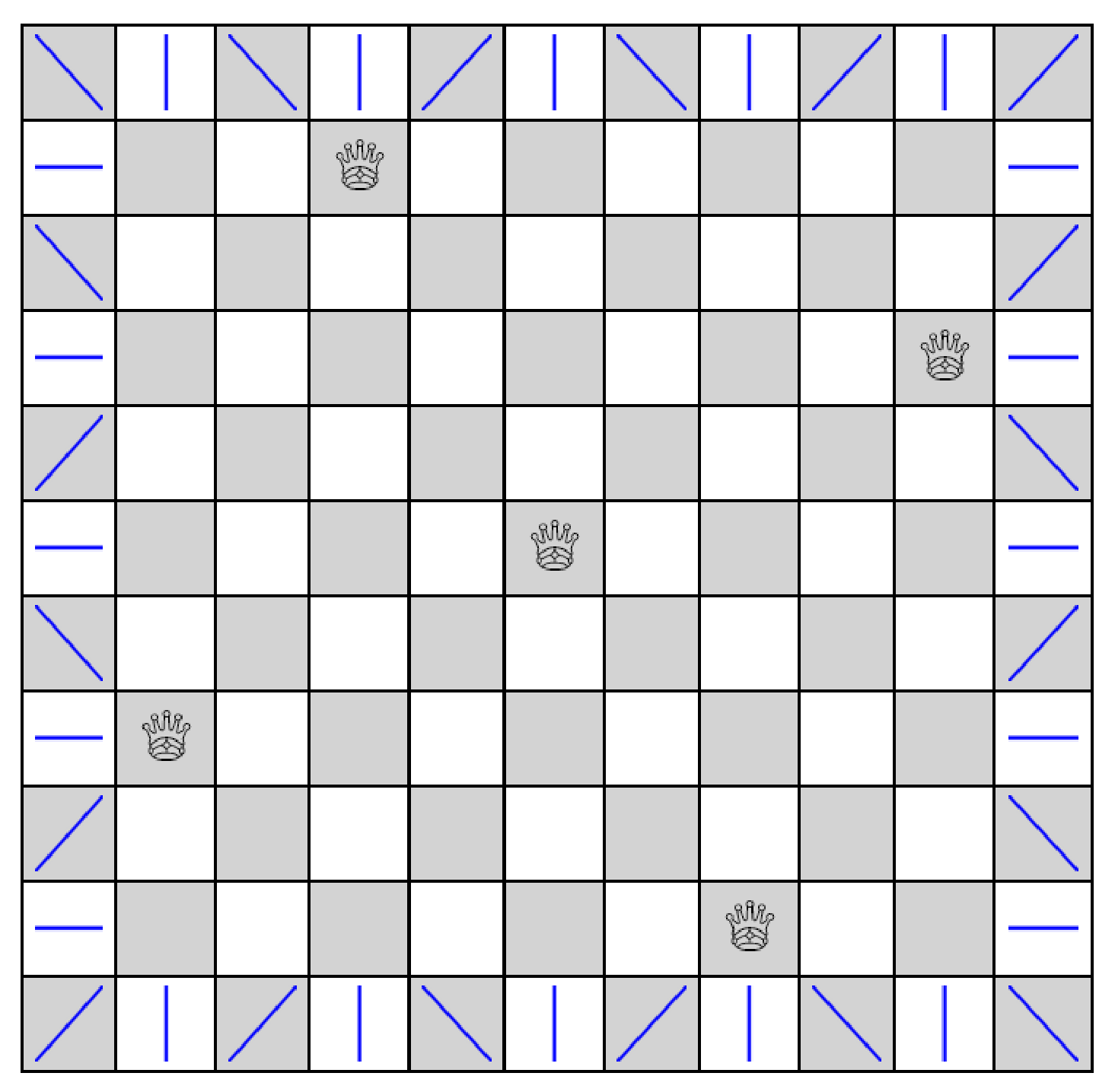}}}}
\end{picture}\\
\end{center}
\unitlength 0.6mm
\caption{The minimum dominating set ${\cal D}$ of $Q_{11 \times 11}$
is shown, with the unique cover of each edge square (see Corollary \ref{coro}) indicated.}
\end{figure}
So up to  equivalence  ${\cal D}$ is the unique minimum
dominating set of
\href{https://www.combinatorics.org/files/v26i4p45/11x11_5Q.html}{\q{11}{11}},
consisting of a foursome and a queen at  its center. This amazing
set has an inf{\kern0pt}luence on many other values of \gamqq.

First, since ${\cal D}$ f{\kern0pt}its on \q{9}{9}, by omitting edge rows and columns of \q{11}{11}\ we get dominating
sets of \qq\ for $(m, n) = (10, 11), (10, 10), (9, 11), (9, 10), (9, 9)$, and these turn out to be minimum dominating
sets. In a sense, the observed failure of monotonicity, $\gamma(\q{8}{11}) = 6 > 5 = \gamma(\q{9}{11})$,
occurs simply because ${\cal D}$ does not f{\kern0pt}it on \q{8}{11}.

Also, by adding edge rows or columns to \q{11}{11}\ and adding corner squares to ${\cal D}$, we obtain minimum
dominating sets for the values $\gamma(\q{11}{12}) = 6$, $\gamma(\q{11}{13}) = 7$,
$\gamma(\q{12}{12}) = 6$, $\gamma(\q{12}{13}) = 7$, $\gamma(\q{12}{14}) = 8$, $\gamma(\q{13}{13}) = 7$,
$\gamma(\q{13}{14}) = 8$, $\gamma(\q{14}{14}) = 8$, and $\gamma(\q{15}{15}) = 9$. Finally, it is shown
in \cite{UB} that ${\cal D}$ gives a set implying $\gamma(\q{53}{53}) = 27$.

It was observed by Eisenstein et al. \cite{EI} that if a dominating set $D$ of $\q{n}{n}$
contains no edge squares, the facts that there are $4(n-1)$ edge squares and every queen covers
eight edge squares imply $|D| \geq \lceil (n-1)/2 \rceil$.  This suggests the bound (\ref{rvs}).

A similar approach leads one to guess the bound of our
next theorem, but some care is needed to handle the general case.

\begin{thm} \label{main} Let $m, n$ be positive integers with $m \leq n$. Then

\begin{equation} \label{rvs2}
\gamma(Q_{m \times n}) \geq \min \left\{m, \left\lceil \frac{m+n-2}{4} \right\rceil\right\}.
\end{equation}
\end{thm}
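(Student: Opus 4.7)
The plan is to adapt the Eisenstein-type edge-square counting argument mentioned just before the theorem to the rectangular case. The $m\times n$ board has $2m+2n-4$ edge squares (the union of the top row, the bottom row, the left column, and the right column). A queen at an interior square has four lines --- row, column, $+$diagonal, $-$diagonal --- each meeting the boundary of the board in exactly two points, so she dominates at most $8$ edge squares. Hence if a dominating set $D$ contains no edge queens, $8|D|\ge 2m+2n-4$, giving $|D|\ge\lceil(m+n-2)/4\rceil$, which already meets the stated bound.

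The delicate case is when $D$ contains edge queens, since such a queen dominates her entire edge row (or column) --- up to $n$ (or $m$) squares --- and the naive ``$8$ per queen'' bound fails. I would do a case analysis on the set $N\subseteq\{T,B,L,R\}$ of edges that contain at least one queen of $D$. Each edge in $N$ is automatically dominated, so it suffices to dominate the remaining edge squares $E'=\bigcup_{e\notin N}e$. In any sub-case, an interior queen contributes at most $8-|N|$ squares to $E'$ (since of her $8$ edge attacks at least one lands on each edge in $N$); a non-corner edge queen contributes at most $3$ to $E'$ (the column/row into the opposite edge plus one endpoint of each of her two diagonals); and a corner queen, whose row and column already lie in two edges of $N$, contributes to $E'$ only through her two diagonal endpoints, so at most $2$. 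Each configuration of $N$ and placement of the edge queens then yields a linear inequality on $|D|$.

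To close the remaining gap I would combine the above with the row-emptiness bound: if $|D|<m$ then some row contains no queen of $D$, and since each queen dominates at most three squares of any fixed empty row (one via her column, one via each of her two diagonals), we get $3|D|\ge n$, that is $|D|\ge\lceil n/3\rceil$. For $n\ge 3m-6$ this alone yields $|D|\ge\lceil(m+n-2)/4\rceil$, and for $n\ge 3m-2$ even $|D|\ge m$ by Proposition~\ref{prop1}. The critical regime is therefore boards close to square, $n<3m-6$, where the refined edge accounting above is indispensable.

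The main obstacle will be the case analysis when $D$ has edge queens, especially corner queens. A single corner queen dominates an entire row and an entire column --- up to $m+n-1$ edge squares --- and two opposite corner queens force all four edges into $N$, making $E'$ empty and the edge argument vacuous for those queens. One must track carefully how each queen's attacks distribute among the edges in $N$ versus those in $E'$, and combine the resulting inequalities with the empty-row/column bounds, so that in every configuration of $N$ and every placement of corner queens the inequalities add up to $|D|\ge\min\{m,\lceil(m+n-2)/4\rceil\}$.
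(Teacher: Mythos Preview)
Your plan has a genuine gap in precisely the case you flag as the obstacle. When $D$ contains two queens at opposite corners, all four edges lie in $N$, so $E'=\emptyset$ and the edge-counting argument yields nothing at all. Your fallback bound $|D|\ge\lceil n/3\rceil$ is then the only tool left, and it is too weak throughout the critical range $n<3m-6$: for instance on the $12\times 12$ board you need $|D|\ge\lceil 22/4\rceil=6$, but $\lceil 12/3\rceil=4$. No amount of bookkeeping on how many edge queens there are will close this, because two corner queens already make the edge argument vacuous while contributing only $2$ to $|D|$. The same failure occurs, less dramatically, whenever $|N|\ge 2$: the per-queen cap $8-|N|$ for interior queens degrades faster than the size of $E'$ shrinks.

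The paper sidesteps the whole case analysis with one idea you are missing: do not use the board's boundary, use the \emph{box} bounded by the outermost \emph{empty} row and \emph{empty} column (these exist once $|D|\le m-2$). By construction no queen sits on the box border, so there is no ``edge queen'' case at all. Every queen is then either inside the box (covering exactly $8$ border squares), in a side region sharing one orthogonal with the box (covering at most $6$), or in a corner region $R$ whose orthogonals miss the box entirely (covering at most $2$). Crucially, every row and column \emph{outside} the box is occupied, which gives a second family of inequalities $2|R|+|S|\ge m+n-2-(\text{box height})-(\text{box width})$. Adding twice this to the border-coverage inequality yields $8|D|\ge 2(m+n-2)+2|R|$ in one stroke. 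Note that your problematic opposite-corner queens land in $R$, so they \emph{improve} the bound rather than breaking it.
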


\begin{proof} It suf{\kern0pt}f{\kern0pt}ices to show that if $\gamma(Q_{m \times n}) \leq m-1$ then
$\gamma(Q_{m \times n}) \geq (m+n-2)/4 $.  So we assume
that $\gamma(Q_{m \times n}) \leq m-1$.

First, suppose $\gamma(\q{m}{n})=m-1$. Then by Proposition \ref{prop1} we have $n <
3m-2$, which implies $m-1 > (m+n-2)/4$ as needed.

Thus we may take  $\gamma(\q{m}{n}) \leq m-2$
and let $D$ be a minimum dominating set of $Q_{m \times n}$.
Since $m \leq n$, there are
at least two rows and at least two columns that do not contain
squares of $D$. Let $a$ be the index of the leftmost empty
column, $b$ the index of the rightmost empty column, $c$ the
index of the lowest empty row, $d$ the index of the highest
empty row. The board has a rectangular sub-board $U$ with corner
squares $(a, c), (a, d), (b, c), \mbox{ and } (b, d)$. Let $E$ be
the set of edge squares of this sub-board.
We say that $U$ is the \emph{box} of $D$ and $E$ is the
\emph{box border} of $D$; these sets are def{\kern0pt}ined for any square set $D$ with
$|D| \leq m-2$.
Here $|E| = 2(d-c) +
2(b-a)$.

 Removing columns $a$ and $b$ and rows $c$ and $d$ divides  the board
into nine regions (some possibly empty). Let $C$ be the set of
squares of $D$ inside $U$; that is, $C = \{ (x, y) \in D \mbox{ : }
a<x<b \mbox{ and } c<y<d \}$. Let $T_{nw}$ be the set of squares
of $D$ in the ``northwest'' region of the $m \times n$ board; that
is, $T_{nw} = \{ (x, y) \in D \mbox{ : } x < a \mbox{ and } y > d
\}$. Similarly we label seven more subsets of $D$ by their
``geographic direction'' from the central region: $T_{n}, T_{ne},
T_{e}, T_{se}, T_{s}, T_{sw}, \mbox{ and } T_{w}$. Let $R = T_{nw}
\cup T_{ne} \cup T_{sw} \cup T_{se}$, the set of those squares of
$D$ whose orthogonals do not meet $U$.  Let $S = T_{n} \cup T_{e}
\cup T_{s} \cup T_{w}$, the set of those squares of $D$ having
exactly one orthogonal that meets $U$. Then $D$ is the disjoint
union of $R$, $S$, and $C$.

Since each column to the left of column $a$ contains at least one
square of $D$,
\begin{equation} \label{in1}
|T_{sw}| + |T_{w}| + |T_{nw}| \geq a-1.
\end{equation}
Similarly,
\begin{eqnarray}
\label{in2} |T_{se}| + |T_{e}| + |T_{ne}| & \geq & n-b, \\
\label{in3}|T_{sw}| + |T_{s}| + |T_{se}| & \geq & c-1, \\
\label{in4}|T_{nw}| + |T_{n}| + |T_{ne}| & \geq & m-d.
\end{eqnarray}
Adding inequalities (\ref{in1})-(\ref{in4}) and using the
def{\kern0pt}initions of $R$ and $S$ gives
\begin{equation} \label{in5}
2 |R| + |S| \geq m + n - 2 - (d-c) - (b-a).
\end{equation}
Each square in $R$ covers at most two squares of $E$, as the
square's orthogonals and one of its diagonals miss $E$. Each
square in $S$ covers at most six squares of $E$, as one of the
square's orthogonals misses $E$. Each square in $C$ covers eight
squares of $E$.  Since $D$ is a dominating set, $D$ covers all
squares of $E$, so
\begin{equation} \label{in6}
2 |R| + 6 |S| + 8 |C| \geq 2(d-c) + 2(b-a).
\end{equation}
Adding two times (\ref{in5}) to (\ref{in6}) gives
\begin{equation} \label{in7}
6 |R| + 8 |S| + 8 |C| \geq 2(m+n-2).
\end{equation}
Since $|D| = |R| + |S| + |C|$, adding $2 |R|$ to both sides of
(\ref{in7}) gives
\[
8 |D| \geq 2(m+n-2+|R|).
\]
Thus
\begin{equation} \label{ineq}
\gamma(Q_{m \times n}) = |D| \geq (m+n-2+|R|)/4,
\end{equation}
which implies the desired conclusion. \end{proof}

A diagram illustrating the proof for $Q_{10 \times 17}$ is given in Figure 4.

\begin{figure} \label{fig10x17sol8482}
\begin{center}
\begin{picture}(100,200)
\put(-195,-40){\resizebox{170mm}{!}{\rotatebox{0}{\includegraphics{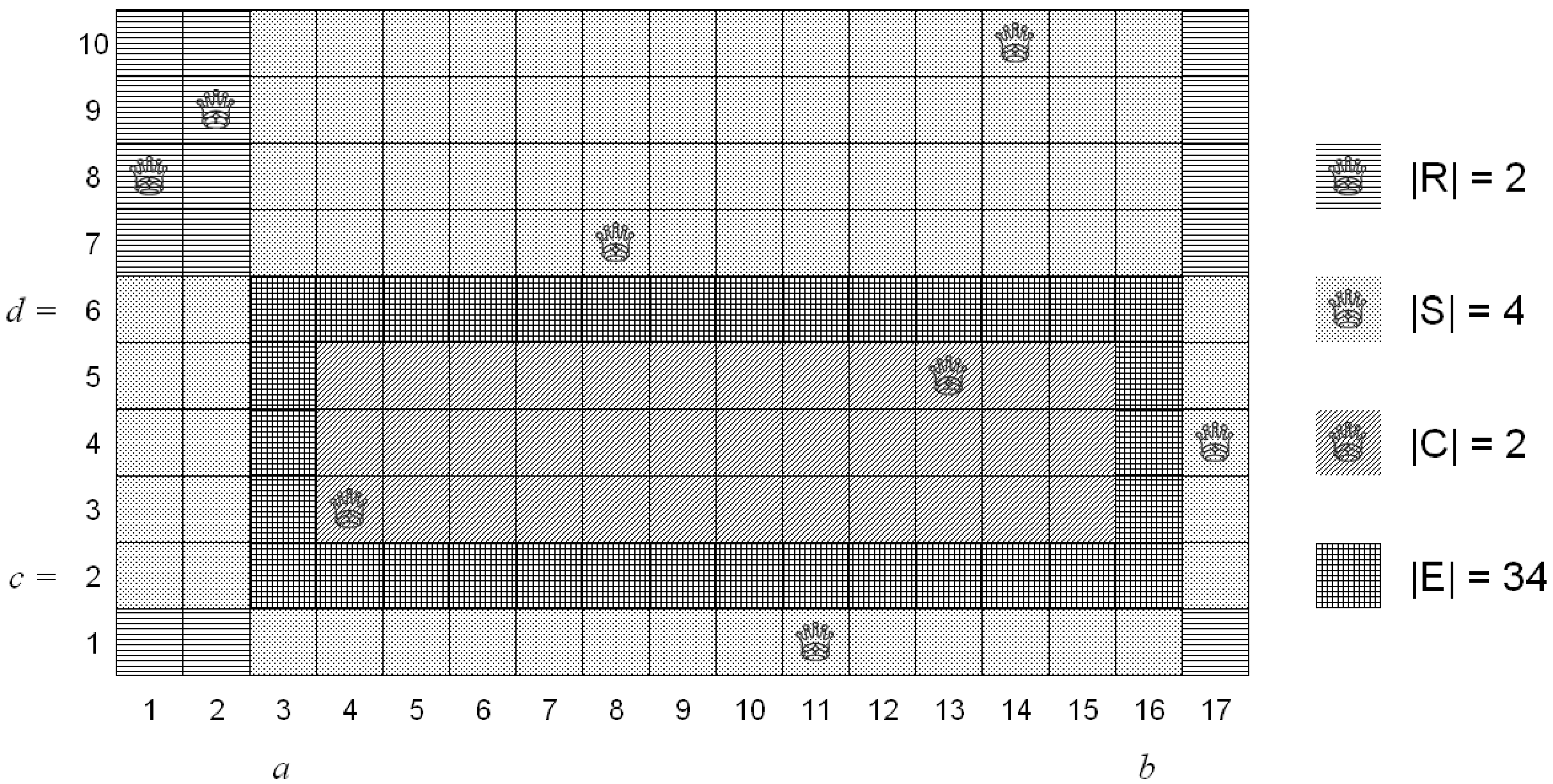}}}}
\end{picture}\\[12mm]
\caption{Illustration of the proof of Theorem \ref{main} with $Q_{10 \times
17}$: $R, S, C$ are the sets of queen squares having respectively $0, 1, \mbox{or } 2$ orthogonals meeting
the box of $D$, and $E$ is the box border of $D$.}
\end{center}
\end{figure}

There are 120 pairs $(m, n)$ satisfying $4 \leq m  \leq n \leq  18$.  Of these,
the bound (\ref{rvs2}) is achieved for 40 pairs (28 with $m \leq 6$),  for 76 pairs the bound
is exceeded by one and for the pairs $(12, 14), (13, 17), (14, 16)$, and $(15, 15)$ the bound is
exceeded by two.


We next explore when $\gamma(\q{m}{n}) = (m+n-2)/4$.  From Proposition \ref{prop1} it
follows that for any positive integer $m$, if $n = 3m+2$ then $\gamma(\q{m}{n}) = m$, and here $m =
(m+n-2)/4$. So we restrict to $n < 3m+2$.

\begin{cor} \label{coro}
Suppose $m \leq n < 3m+2$ and $\gamma(\q{m}{n}) = (m+n-2)/4$. Let $D$
be a minimum dominating set of \q{m}{n}.  Then $|D| \leq m-2$, each box border
square is covered exactly once by $D$, and $D$ is independent.
\end{cor}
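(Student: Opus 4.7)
The plan is to retrace the proof of Theorem~\ref{main} and read off the equality conditions forced by $|D|=(m+n-2)/4$.

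I would start by showing $|D|\leq m-2$. If $|D|\geq m-1$, then $(m+n-2)/4\geq m-1$ gives $n\geq 3m-2$, and Proposition~\ref{prop1} forces $|D|=m$; the equation $|D|=(m+n-2)/4$ then yields $n=3m+2$, contradicting $n<3m+2$. Hence $|D|\leq m-2$, the box $U$ and border $E$ are well-defined, and the derivation of~(\ref{ineq}) applies. Combined with $|D|=(m+n-2)/4$, this forces $|R|=0$ and tightness of both~(\ref{in5}) and~(\ref{in6}).

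Tightness of~(\ref{in5}), which was obtained by summing (\ref{in1})--(\ref{in4}), forces each of those four inequalities to be tight; hence every column with $x<a$ or $x>b$ and every row with $y<c$ or $y>d$ contains exactly one queen of $D$. Tightness of~(\ref{in6}) is the chain
\[
|E| \;\leq\; \sum_{e\in E}|\{q\in D:q\text{ covers }e\}| \;=\; \sum_{q\in D}|\{e\in E: q\text{ covers }e\}| \;\leq\; 2|R|+6|S|+8|C| \;=\; |E|,
\]
which forces both inequalities to be equalities: each border square is covered exactly once by $D$ (the second conclusion), and each queen attains its per-queen maximum. Concretely, every $q\in C$ covers $8$ border squares, so each of the four lines through $q$ crosses the interior of $U$; every $q\in S$ covers $6$, so both diagonals through $q$ and its unique orthogonal meeting $U$ cross the interior of $U$.

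For independence, suppose $q_1,q_2\in D$ share a line $L$. If $L$ is a row with $y<c$ or $y>d$, or the analogous column, the one-queen-per-line condition just established is violated. If $L$ is a row with $c<y<d$ or a column with $a<x<b$, then $L$ crosses the interior of $U$, so $L\cap E$ consists of two border squares, each covered by both $q_1$ and $q_2$, contradicting single coverage. Finally, if $L$ is a diagonal, then $q_1,q_2\in S\cup C$ since $R=\emptyset$, and the per-queen-maximum analysis above forces $L$ to cross the interior of $U$; again $L\cap E$ has two doubly-covered squares, a contradiction. Hence $D$ is independent. The main obstacle is the diagonal case: one has to unpack the per-queen-maximum condition carefully to guarantee that the specific diagonal through a given queen in $S\cup C$ meets the interior of $U$ (rather than merely clipping a corner), so that a second queen on that diagonal inevitably forces double coverage of two distinct border squares.
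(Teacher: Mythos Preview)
Your proof is correct and follows essentially the same approach as the paper's: both arguments force $R=\emptyset$ and equality throughout (\ref{in1})--(\ref{in7}), deduce single coverage of $E$ and the per-queen maxima, and then combine ``lines meeting $E$ carry at most one queen'' with ``orthogonals outside $U$ carry exactly one queen'' to get independence. The only difference is organizational---the paper phrases the diagonal step as ``every square of $D$ is diagonally adjacent to four squares of $E$, so any line through a queen not meeting $E$ is an orthogonal,'' while you split by line type and unpack the $S$-case explicitly; the content is the same.
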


\begin{proof}
From $n < 3m+2$ we have $|D| = (m+n-2)/4 \leq m-2$, so the box of $D$ is def{\kern0pt}ined.
Since $|D| = (m+n-2)/4$, in this setting we have equality in inequalities (\ref{in1})\mbox{--}(\ref{ineq}),
so each square of the box border $E$ is covered exactly once by $D$.
Thus any line meeting $E$
contains at most one square of $D$. Every square of $D$ must be diagonally adjacent to four squares of $E$, so
if any line containing a square of $D$ does not meet $E$, it is an orthogonal.
From (\ref{ineq}) we see that here the set $R$ of ``corner squares'' in $D$ is
empty, so every square of $D$ has at least one orthogonal meeting $E$; then
since (\ref{in1})--(\ref{in4}) are equations here, each orthogonal that misses $E$ contains exactly one
square of $D$.  Thus $D$ is independent.
\end{proof}

Rarely does a minimum dominating set cover each of its box border squares
uniquely; see Figure 3 for an example. We also note that the minimum dominating sets \#1-4
for \q{11}{12} have this property.
Each of these sets consists of a foursome centered at $(13/2, 13/2)$ plus the corner squares $(1, 1)$
and $(12, 1)$, so is not independent.

As mentioned
earlier, $\gamma(\q{n}{n}) = (n-1)/2$ is achieved only for $n = 3, 11$.  Considering Corollary \ref{coro}, we
suspect that the answer to the following question is no.

\begin{ques}  \label{wonderful}
Does $\gamqq = (m+n-2)/4$ with $m \leq n < 3m + 2$ occur, other than for $(m, n) = (3, 3)$ and $(11, 11)$?
\end{ques}

 We next extend the method of proof used in \cite{CO, RV, WE} for the lower bound (\ref{rvs}) to show that
the dimensions of the box of $D$ give a lower bound for $|D|$.

\begin{prop} \label{morebound}
Let $m \leq n$ and let $D$ be a dominating set of \qq\ of size at most $m-2$.  Let $m'$ be the number of rows
and $n'$ the number of columns of the box of $D$.  Then:

If $m' > n'$ then $|D| \geq \lceil \frac{n}{2} \rceil$;

If $m' \leq n'$ then $|D| \geq \left \lceil \frac{n - 1 - (n' - m')}{2} \right \rceil$.
\end{prop}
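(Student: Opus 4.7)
My plan is to adapt the proof of Theorem~\ref{main} by replacing the single box-border inequality~(\ref{in6}) with four direction-specific inequalities and then combine them with the outer-strip inequalities~(\ref{in1})--(\ref{in4}). Retaining the notation of that proof, write $A=|T_n|+|T_s|$ and $B=|T_w|+|T_e|$, so that $|D|=|R|+A+B+|C|$.

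First, I would count border by border. For the top border of the box (row $d$, the $n'$ squares with columns in $[a,b]$), any queen whose column lies in $[a,b]$---that is, in $T_n\cup T_s\cup C$---covers up to three border squares (its column plus each diagonal), whereas a queen with column outside $[a,b]$ (in $T_w\cup T_e\cup R$) reaches the top border only via a single diagonal and covers at most one. This gives $3(|C|+A)+B+|R|\geq n'$, and hence, using $|D|=|R|+A+B+|C|$, the cleaner form $|C|+A\geq (n'-|D|)/2$. The bottom border yields the same inequality, and analogous reasoning on the left and right borders gives $|C|+B\geq (m'-|D|)/2$.

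Next, combining with the consequences of (\ref{in1})--(\ref{in4}), namely $|R|+B\geq n-n'$ and $|R|+A\geq m-m'$, one has $|D|\geq (|R|+B)+(|C|+A)\geq (n-n')+(n'-|D|)/2$, and by symmetry $|D|\geq (m-m')+(m'-|D|)/2$. The case split in the proposition corresponds to which of these is binding: for $m'\leq n'$, rearranging the first (with an additional contribution from the left-border inequality) should yield $|D|\geq (n+m'-n'-1)/2$; for $m'>n'$, the second gives $|D|\geq n/2$. Taking ceilings accounts for the integrality of $|D|$.

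The main obstacle is the precise algebraic combination. The inequalities above, combined naively, only yield $|D|\geq (2n-n')/3$ and $|D|\geq (2m-m')/3$, which fall short of the stated bounds in general. The extra refinement to reach the claimed bound should come from using both the top-border inequality $|C|+A\geq (n'-|D|)/2$ and the left-border inequality $|C|+B\geq (m'-|D|)/2$ simultaneously with the outer-column inequality, weighted according to the case hypothesis $m'\leq n'$ or $m'>n'$. A secondary subtlety, which I expect to be routine, is verifying the coverage bounds region by region---in particular, confirming that corner queens and queens in $T_w\cup T_e$ genuinely cover at most one top-border square, by checking that their rows, columns, and the relevant diagonals do not intersect $[a,b]\times\{d\}$ in more than one square.
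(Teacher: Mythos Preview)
Your plan has a genuine gap: the four border inequalities you derive are too weak to reach the stated bound, and you acknowledge this without closing the hole. Concretely, from $|C|+A\geq (n'-|D|)/2$ and $|R|+B\geq n-n'$ (together with $|D|=|R|+A+B+|C|$) you only get $|D|\geq (2n-n')/3$; no nonnegative linear combination of your four inequalities can do better than bounds of this $1/3$ flavor. For instance, when $m'=n'=n$ (a square board with the box equal to the whole board) your inequalities yield $|D|\geq n/3$, whereas the proposition asserts $|D|\geq (n-1)/2$. So the ``extra refinement'' you hope for cannot come from reweighting alone.

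What the paper does differently is to abandon the full box border and instead cover a set $S$ consisting of two horizontal segments of length exactly $m'-1$ lying in rows $c$ and $d$. Because the horizontal extent of $S$ equals the vertical gap $d-c$, each diagonal of any queen meets $S$ in at most one square, so every queen covers at most two squares of $S$ diagonally (and at most two more via its column, if that column meets $S$). This yields $4|D|-2|P|\geq 2(m'-1)$, i.e.\ $|D|\geq (m'-1+|P|)/2$, where $P$ is the set of queens whose columns lie outside the strip. The freedom to slide the strip horizontally then gives $|P|\geq n-(m'-1)$ when $m'>n'$ and $|P|\geq n-n'$ when $m'\leq n'$, producing exactly the two stated bounds. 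The key idea you are missing is this matching of the strip width to the row span, which sharpens the diagonal-coverage count from your ``at most one per border'' (summing to four over all four borders) down to ``at most two total.''
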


\begin{proof} Let $a, b, c, d$ be def{\kern0pt}ined as in the proof of Theorem \ref{main}.
(Then $m' = d-c + 1$ and $n' = b - a + 1$.) Since $m' \leq m \leq n$,
we may choose an integer $e$ such that $e$ through $e + m' - 2$ are indices of
columns of the board. Set $S = \{ (x, c), (x, d) \mbox{ : } e \leq x \leq e + m' -2 \}$ and
$P = \{ (x, y) \in D \mbox{ : } x < e \mbox{ or } x > e + m' - 2 \}$. Then no square is
diagonally adjacent to more than two squares of $S$ and no square of $P$ is orthogonally
adjacent to any square of $S$. As the $2(m'-1)$ squares of $S$ are covered by $D$, $2(m'-1) \leq 2|P| + 4(|D| - |P|)$, which implies
\begin{equation} \label{bounder}
|D| \geq \left \lceil \frac{m'-1+|P|}{2} \right \rceil.
\end{equation}
If $m' > n'$ then we can choose $e$ so that all columns that do not meet $S$ are occupied, so $|P| \geq n - (m'-1)$.
If $m' \leq n'$ we can choose $e$ so that $S$ is contained in the top and
bottom edges of $U$, and then
$|P| \geq  n - n'$. In both cases, (\ref{bounder}) implies the conclusion.
\end{proof}

As $\gamma(\q{n}{n}) = (n-1)/2$ only for $n = 3, 11$, we have $\gamma(\q{n}{n}) \geq \lceil n/2 \rceil$ for
all other positive integers $n$. There is much evidence that this lower bound is quite
good. Work from \cite{BM1,BMC,GW, KG, OW, WE, LB} reported in \cite{OW} shows that
for $n$ from 1 to 120, excluding 3 and 11, we have $\lceil n/2 \rceil \leq \gamma(\q{n}{n}) \leq \lceil n/2 \rceil + 1$.
In this range, $\gamma(\q{n}{n}) = \lceil n/2 \rceil$  is known for 46 values of $n$ and
$\gamma(\q{n}{n}) = \lceil n/2 \rceil +1$ is known for $n = 8, 14, 15, 16$. Also, $\gamma(\q{(4k+1)}{(4k+1)}) = 2k+1$ is known for $1 \leq k \leq 32$.

 For $m < n$, we have little evidence that the bound (\ref{rvs2}) is good.   We  were not able to use the
 methods of the proofs of Theorem \ref{main} and Proposition \ref{morebound}
to improve on this bound. Also,
a computer search using a greedy algorithm for some larger $m, n$ did not supply evidence about lower bounds for $\gamma(\qq)$.

The statement of Proposition \ref{morebound} leads one to consider the quantity
$n/2$.  We have checked that when $4 \leq m \leq n \leq 18$, $\gamma(\q{m}{n}) \geq \min\{m-1, \lfloor n/2
\rfloor -1 \}$. This bound and the bound (\ref{rvs2}) are close only when $m$
and $n$ are close.    So we ask the following.

\begin{ques} \label{whatlowerbound}
For $m, n$ with  $m \leq n$, what is a good general lower bound for $\gamma(\qq)$? In particular, is it true
that $\gamma(\qq) \geq \min \{ m-1, \lfloor n/2 \rfloor - 1 \}$?
\end{ques}

\section{Construction of dominating sets} \label{construct}

Given dimensions $m$ and $n$, we would like a general approach that would allow us to construct minimum
dominating sets of \q{m}{n}, or at least reasonably small dominating sets.  We have two dif{\kern0pt}f{\kern0pt}iculties to consider.

The f{\kern0pt}irst dif{\kern0pt}f{\kern0pt}iculty was just discussed:  in general we know the value of $\gamma(\qq)$ only approximately.

The second dif{\kern0pt}f{\kern0pt}iculty is that construction of a dominating set of \qq\ generally means specifying most or all
of the lines that the set is to occupy. There are some restrictions that the indices of the lines must satisfy, as we now describe.

Let $D = \{ (x_{i}, y_{i}) \mbox{ : } 1 \leq i \leq l \}$ be a set of $l$ squares of \qq\ that
occupies dif{\kern0pt}ference diagonals $(d_{i})_{i=1}^{l}$ and sum diagonals $(s_{i})_{i=1}^{l}$. Since the square \xy\
is on the \dd\ with index $y-x$ and the \sd\ with index $y+x$, summing over $D$ gives
\begin{equation} \label{linearcon}
\sum_{i=1}^{l} d_{i} = \sum_{i=1}^{l} y_{i} - \sum_{i=1}^{l} x_{i} \mbox{  and  }
\sum_{i=1}^{l} s_{i} = \sum_{i=1}^{l} y_{i} + \sum_{i=1}^{l} x_{i}.
\end{equation}

The Parallelogram Law $2x^{2} + 2y^{2} = (y-x)^{2} + (y+x)^{2}$ gives a quadratic constraint
\begin{equation} \label{plleleq}
2 \sum_{i=1}^{l} x_{i}^2 + 2 \sum_{i=1}^{l} y_{i}^2 = \sum_{i=1}^{l} d_{i}^2 + \sum_{i=1}^{l} s_{i}^2
\end{equation}
on the line indices.

In each of the two constructions given below, we will refer to lines that must be occupied for
domination as \emph{required} lines and other lines as \emph{auxiliary} lines.

Both constructions produce a number of minimum dominating sets, but neither can produce a dominating
set of \q{m}{n}\ of size less than $\lfloor n/2 \rfloor$. This is a little evidence for the possible bound
mentioned in Question \ref{whatlowerbound}.

\subsection{Domination by orthodox covers}

This idea generalizes \cite[Section 2]{UB}.  Let $D$ be a set of squares of \qq. If it is possible to place the
origin of the coordinate system so that every even column and every even row contains a square of $D$,
we will say $D$ is an \emph{orthodox} set.
That is, an orthodox set is one that occupies at least every other column and every other row of \qq.

Say that square \xy\ of \qq\ is \emph{even} if $x+y$ is even, \emph{odd} if $x+y$ is odd. We divide the even
squares of \qq\ into two classes: \xy\ is {\em even-even\/} if both $x$ and $y$ are even, {\em odd-odd\/} if
both are odd. If $D$ is an orthodox set and each odd-odd square of \qq\
is covered by
some square of $D$, we say $D$ is an \emph{orthodox cover}. For example, solution
\href{https://www.combinatorics.org/files/v26i4p45/07x11_5Q.html\#Solution10}{\#10}
for \q{7}{11} given in Table 1 is an orthodox cover; take the origin at
$(6, 3)$  to see this.

It is clear from the def{\kern0pt}inition that an orthodox cover dominates every even square of \qq, and since every odd
square of \qq\ is on one even-indexed orthogonal, all odd squares are also dominated: an orthodox cover is
a dominating set of \qq. Many orthodox covers appear in the
\href{https://www.combinatorics.org/ojs/index.php/eljc/article/view/v26i4p45/HTML}{appendix},
and are labeled there as such.

Since \qq\ has at least $\lfloor n/2 \rfloor$ even-indexed columns, an orthodox set on \qq\ has at
least $\lfloor n/2 \rfloor$ members.  Generally, we expect that most of the squares of $D$ will be
even-even, to help dominate the odd-odd squares diagonally.  When $n$ is considerably larger than $m$,
there are more possibilities of placing queens on odd squares that occupy even columns.  Also, it is possible
sometimes to achieve a dominating set of size less than $\lfloor n/2 \rfloor$ by a minor modif{\kern0pt}ication, as is
shown in  solution
\href{https://www.combinatorics.org/files/v26i4p45/07x12_5Q.html\#Solution1}{\#1}
for \q{7}{12} in Table 1. If the center of the square there labeled
$(6, 3)$
is taken to be the origin of the coordinate system, the dominating set shown misses being an orthodox
set only by not occupying the rightmost column. Thus the three odd-odd squares in that column are not covered
along their column, as they would be by an orthodox set.  But the queen on a dark square covers those three
squares, and the odd squares of its column, and thus completes a dominating set of size 5.

Minimum dominating set
\href{https://www.combinatorics.org/files/v26i4p45/12x16_8Q.html\#Solution147}{\#147}
for \q{12}{16}\ is an orthodox cover with a single queen on a dark square, at
$(6, 10)$. The squares covered only by this queen are the
dark squares in its column and $(1, 10)$. Replacing $(6, 10)$ with the white square $(6, 5)$
covers those squares and, adding a row 0 to the board, also the square $(1, 0)$.
In fact, the full set now covers all of row 0 and is thus a minimum dominating set of
\q{13}{16}; it is solution
\href{https://www.combinatorics.org/files/v26i4p45/13x16_8Q.html\#Solution15}{\#15} for \q{13}{16},
rotated by a half-turn.

There are many ways to create orthodox covers, and we will only give one example.  An approach is to
regard \q{m}{n}\ as the union of overlapping copies of \q{m}{m}; for odd $m$, this allows us to
use \cite[Theorem 1]{UB}, which gives suf{\kern0pt}f{\kern0pt}icient conditions for an orthodox set on \q{m}{m} to
be an orthodox cover.

\begin{exam}   \label{orthexam} An orthodox cover implying $\gamma(\q{13}{19}) \leq 10$.\\
We take the origin of the coordinate system to be the center of \q{13}{19}, and regard \q{13}{19}\ as the
union of two copies of \q{13}{13}, centered at $(\pm 3, 0)$.  From \cite[Theorem 1]{UB}, if we regard
the center of \q{13}{13}\ as the origin, an orthodox set on \q{13}{13}\ dominates if the set occupies the
sum and \dd s with indices in $\{-6, -2, 0, 2, 6 \}$. Asking this on both copies of \q{13}{13}, we wish to have
our orthodox set occupy the sum and \dd s which (on \q{13}{19}) have indices in $\{-6, -2, 0, 2, 6 \} \pm 3$, which is
$\{ \pm 1, \pm 3, \pm 5, \pm 9 \}$, so there will be two auxiliary \dd\ indices $d_{1}, d_{2}$ and two
auxiliary \sd\ indices $s_{1}, s_{2}$. The required column indices are $\pm 1, \pm 3, \pm 5, \pm 7, \pm 9$,
so there will be no auxiliary column indices.  The required row indices are $0, \pm 2, \pm 4, \pm 6$, so
there will be three auxiliary row indices $r_{1}, r_{2}, r_{3}$.

From (\ref{linearcon}) we have $d_{1} + d_{2} = r_{1} + r_{2} + r_{3} = s_{1} + s_{2}$ and (\ref{plleleq}) gives
$d_{1}^{2} + d_{2}^{2} + s_{1}^{2} + s_{2}^{2} = 420 + 2(r_{1}^{2} + r_{2}^{2} + r_{3}^{2})$.  We attempt
to f{\kern0pt}ind a solution with symmetry by a half-turn about the board center: this means
$r_{1} = 0, r_{2} = - r_{3}, d_{1} = -d_{2}, \mbox{ and } s_{1} = -s_{2}$.  Then the quadratic constraint simplif{\kern0pt}ies
to $d_{1}^{2} + s_{1}^{2} = 210 + 2r_{2}^{2}$, of which one solution is $d_{1} = 13, s_{1} = 7, r_{2} = 2$.
Now all lines are specif{\kern0pt}ied, and it is not dif{\kern0pt}f{\kern0pt}icult to f{\kern0pt}ind the solution $D = \{ \pm(9, 0), \pm(7, -6), \pm(5, 2), \pm(3, 2), \pm(1, -4) \}$.
\end{exam}

\subsection{Domination by centrally strong sets}

We begin by considering a board $C$ which is to be a central sub-board of a larger board $B$.
Say that $C$ has $m_{1}$ rows and $n_{1}$ columns, with $m_{1} \geq n_{1}$ and $m_{1}, n_{1}$ not both
even. It is convenient here to have the board squares of side length two, and place $C$ with its center at the origin.
Thus if, for example, $m_{1}$ is odd and $n_{1}$ is even, then each square has center \xy\ with $x$ an odd integer and $y$ an even one.

We then wish to choose a nonnegative integer $k$ and a set $D$ of squares of $B$ (actually, all or almost all in $C$)
such that $D$ contains at least one square from the extension of each orthogonal of $C$ to $B$, and $D$
contains exactly one square from the extension  of each \dd\ of $C$, except none from the highest $k$ and
the lowest $k$ extended \dd s; similarly for \sd s. Let
\begin{equation} \label{mnk}
m = m_{1} + 2n_{1} - 2k, \hspace{.3in}  n = 2m_{1} + n_{1} - 2k, \hspace{.3in} g = m_{1} + n_{1} - 2k - 1.
\end{equation}
Then $m \leq n$, and it is straightforward to verify that if $C$ is taken to be the central $m_{1} \times n_{1}$ sub-board of the $m \times n$ board $B$, then $D$ is a dominating set of \qq\ and $|D| = g$.
Such a $D$ will be called a \emph{centrally strong} set, as it generalizes the idea discussed for square
boards in \cite[page 234]{UB}. We note that our def{\kern0pt}inition requires each square of $D$ to have both
diagonals among the required ones, and thus both have indices of absolute value at
most $m_{1} + n_{1} - 2k - 2$, but this does not imply $D \subseteq C$.  If in fact $D \subseteq C$, we
say that $D$ is a \emph{strict} centrally strong set.

A number of strict centrally strong sets occur in the
\href{https://www.combinatorics.org/ojs/index.php/eljc/article/view/v26i4p45/HTML}{appendix},
and are labeled there as such.  We note that these sets can only occur when $m \leq n <
2m$; this follows from (\ref{mnk}) and the fact that since there will be $n_{1} - 2k - 1$
auxiliary row indices, this quantity is nonnegative.

One merit of this construction is that a single  centrally strong  $D$ gives an upper
bound for \gamqq\ for several   pairs $(m, n)$  since
$D$ is conf{\kern0pt}ined to a small  central  region of the $m \times n$ board, especially if $D$ is strict. For example, there
is a strict centrally strong set $D = \{ \pm(-5, 0)$, $\pm(-3, 4)$, $\pm(-1, 6)$, $\pm(1, 2)$, $\pm(3, 6) \}$
with $m_{1} = 9, n_{1} = 6, \mbox{ and } k=2$, and $|D| = 10$, which shows that $\gamqq \leq 10$
when $9 \leq m \leq 17 \mbox{ and } 6 \leq n \leq 20$.  For some of these pairs $(m, n)$, this bound is
poor, but for the six pairs with $m + n \geq 35$, combining with the bound (\ref{rvs2}) gives
$9 \leq \gamqq \leq 10$, and 10 is a useful upper bound for some of the smaller boards also.

The simplest centrally strong sets occur with $m_{1} \geq 1$, $n_{1} = 1$ and $k = 0$, where we get
$m_{1}$ queens occupying all squares of the $m_{1} \times 1$ board $C$, and the following bound (which
we have stated in terms of $m = m_{1} + 2$). For $3 \leq m \leq 10$ at least, this bound gives the exact
value of $\gamma(\q{m}{(2m-3)})$.

\begin{prop}  \label{n1equal1}
For $m \geq 3$, $\gamma(\q{m}{(2m-3)}) \leq m-2$.
\end{prop}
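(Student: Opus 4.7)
The plan is to invoke the centrally strong construction just described with the simplest nontrivial parameters $m_1 = m-2$, $n_1 = 1$, $k = 0$. Substituting into (\ref{mnk}) gives board dimensions $m_1 + 2n_1 - 2k = m$ and $2m_1 + n_1 - 2k = 2m-3$, with dominating-set size $g = m_1 + n_1 - 2k - 1 = m-2$; the set $D$ is taken to be every square of the central $(m-2) \times 1$ column sub-board $C$. The task is then to translate this construction into standard coordinates and verify that $D$ really dominates.

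Concretely, I would place the $m \times (2m-3)$ board with squares $(x,y)$, $1 \leq x \leq 2m-3$ and $1 \leq y \leq m$, so that the middle column is $x = m-1$ and $D = \{(m-1, j) : 2 \leq j \leq m-1\}$, giving $|D| = m-2$. The column $x = m-1$ is dominated by every queen in $D$, and each of the rows $y = 2, 3, \ldots, m-1$ contains a queen and is therefore completely dominated. Only rows $1$ and $m$ require genuine diagonal checking.

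For row $1$, the queen at $(m-1, j)$ meets it on its difference diagonal at $x = m-j$ and on its sum diagonal at $x = m+j-2$. As $j$ ranges over $\{2, 3, \ldots, m-1\}$, the first value sweeps out $\{1, 2, \ldots, m-2\}$ and the second sweeps out $\{m, m+1, \ldots, 2m-3\}$; together with the queens' own column $x = m-1$ this accounts for every column in row $1$. Row $m$ then follows by the vertical reflection symmetry of the construction.

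There is no real obstacle to overcome; the proof reduces to the pleasant observation that $m-2$ consecutive queens stacked in a single column scatter their two diagonal families into complementary halves of each distant row, leaving exactly the queens' own column to be dominated orthogonally. The only things to double-check are off-by-one issues in the endpoints of the diagonal sweeps, which come out exactly because $D$ has precisely $m-2$ queens and the distant rows each have exactly $2(m-2)$ squares off the central column.
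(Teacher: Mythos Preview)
Your proposal is correct and follows exactly the paper's approach: the paper states this proposition as the immediate consequence of the centrally strong construction with $m_1 = m-2$, $n_1 = 1$, $k = 0$, without a separate proof environment. Your explicit coordinate verification of domination on rows $1$ and $m$ simply fills in what the paper leaves as ``straightforward to verify.''
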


We next consider the ef{\kern0pt}fect of (\ref{linearcon}) and (\ref{plleleq}) on the search for centrally strong sets.
Symmetry and the requirement that each \dd\ contains exactly one square of $D$ imply that the sum of the \dd\ indices of $D$ is zero.
Similarly the sum of the \sd\ indices of $D$ is zero, and then (\ref{linearcon}) implies that $\sum_{(x, y) \in D} x = 0$ and
$\sum_{(x, y) \in D} y = 0$. As we require a centrally strong set to occupy all (extended) columns of the sub-board, we regard the
$n_{1}$ indices of these columns as required column indices; by symmetry their sum is zero. As $C$ has $n_{1}$ columns and
$g$ occupied squares, there will be $g - n_{1} = m_{1}-2k-1$ auxiliary column indices, each having parity opposite to that of $n_{1}$.
Since $\sum_{(x, y) \in D} x = 0$ and all required column indices sum to zero, so do the auxiliary column indices. Similarly
there will be $m_{1}$ required row indices and  $g - m_{1} = n_{1} - 2k - 1$ auxiliary row indices, with sum zero, each
having parity opposite to that of $m_{1}$.  If $D$ is strict, then all indices of occupied columns have absolute
value at most $n_{1} - 1$ and all indices of occupied rows have absolute value at most $m_{1} - 1$.
(We have required that $m_{1}, n_{1}$ not both be even because if they were, there would be an odd number of auxiliary row indices,
each odd, so their sum could not be even, thus not zero.)

Using the identities $\sum_{i=1}^{j} (2i-1)^{2} = \binom{2j+1}{3}$ and $\sum_{i=1}^{j} (2i)^{2} = \binom{2j+2}{3}$,
we see that the sum of the squares of the indices of all occupied diagonals of $C$ is
$4 \binom{g+1}{3}$, the sum of the squares of the required column indices is $2 \binom{n_{1}+1}{3}$ and the sum
of the squares of the required row indices is $2 \binom{m_{1}+1}{3}$. Letting $\sum_{orth}$ denote the sum of the
squares of the auxiliary column indices and auxiliary row indices, the quadratic constraint (\ref{plleleq}) gives
\begin{equation}  \label{quadcstrong}
\sum_{orth} = 2\left[ \binom{g+1}{3} - \binom{m_{1} + 1}{3} - \binom{n_{1} + 1}{3} \right].
\end{equation}

Combined with Proposition \ref{n1equal1}, part (a) of the following proposition shows how small a centrally strong set can be. In parts (b) and (c), we limit the values of $m_{1}, n_{1}, k$ that need be considered when constructing centrally strong sets.

We say that a value of $k$ for which there exists a centrally strong set on \q{m_{1}}{n_{1}}\ is \emph{feasible} for $(m_{1}, n_{1})$.

\begin{prop}  \label{atbesthalfn}
(a) For any centrally strong set $D$ with $n_{1} > 1$, $|D| \geq n/2$.\\
(b) For any $(m_{1}, n_{1})$, it is only necessary to use the largest feasible $k$ to determine all upper bounds
for \gamqq\ implied by centrally strong sets from $(m_{1}, n_{1})$.\\
(c) If $k$ is feasible for $(m_{1}, n_{1})$ and $k+1$ is feasible for $(m_{1}, n_{1}+2)$, the latter gives the more useful result.
\end{prop}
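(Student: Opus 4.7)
The plan is to handle each of the three parts in turn.

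For part (a), I would use the quadratic constraint (\ref{quadcstrong}). A direct substitution of $g = m_1+n_1-2k-1$ and $n = 2m_1+n_1-2k$ from (\ref{mnk}) gives $g - n/2 = (n_1-2k-2)/2$, so the claim $|D| \geq n/2$ is equivalent to $n_1 \geq 2k+2$. Since feasibility only forces $n_1 \geq 2k+1$ (the number $n_1-2k-1$ of auxiliary row numbers must be nonnegative), the sole remaining case to rule out is $n_1 = 2k+1$ with $n_1 > 1$. In that case $g = m_1$, so (\ref{quadcstrong}) collapses to $\sum_{orth} = -2\binom{n_1+1}{3}$; since the left side is a sum of squares, this forces $\binom{n_1+1}{3} \leq 0$, i.e.\ $n_1 \leq 1$, contradicting $n_1 > 1$.

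For part (b), my plan is an augmentation argument. Let $k_{\max}$ be the largest feasible $k$ for $(m_1,n_1)$, and let $k < k_{\max}$. The $(m_1,n_1,k)$ construction gives a set $D_k$ of size $g(k)$ dominating a board $B_k$ of dimensions $m(k)\times n(k)$, while $(m_1,n_1,k_{\max})$ gives $D_{k_{\max}}$ of size $g(k)-2(k_{\max}-k)$ dominating a centrally contained sub-board $B_{k_{\max}}$ with $2(k_{\max}-k)$ fewer rows and columns. I would show by induction on the number of expansion layers that, placing two queens at a pair of opposite newly created outer corners for each of the $k_{\max}-k$ layers, one obtains a dominating set of $B_k$ whose size is $g(k_{\max})+2(k_{\max}-k)=g(k)$. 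The key check is that the two corner queens added in one layer together occupy the two new rows and the two new columns, hence cover the whole new frame; meanwhile the old queens in $D_{k_{\max}}$ still dominate all the old squares. So any upper bound on $\gamma(Q_{m\times n})$ derivable from $(m_1,n_1,k)$ is also derivable from $(m_1,n_1,k_{\max})$ with added corner queens, and smaller values of $k$ give no new information.

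For part (c), the claim follows by direct calculation from (\ref{mnk}): substituting $(m_1, n_1+2, k+1)$ gives $g' = m_1 + (n_1+2) - 2(k+1) - 1 = g$, $m' = m_1 + 2(n_1+2) - 2(k+1) = m+2$, and $n' = 2m_1 + (n_1+2) - 2(k+1) = n$. Thus $(m_1, n_1+2, k+1)$ yields a dominating set of the same size $g$ on a board with two additional rows and the same number of columns, so the resulting upper bound applies to a strictly larger family of board dimensions.

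The main obstacle I anticipate is in (b): one must check that the two corner queens added in each layer truly cover the entire new frame, and (minor bookkeeping) that those corner positions have the right coordinate parity to be legal board squares. Part (a) becomes a one-line calculation once the collapse of (\ref{quadcstrong}) at $n_1 = 2k+1$ is noticed, and part (c) is routine.
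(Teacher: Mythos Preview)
Your proposal is correct and follows essentially the same approach as the paper. Part (a) is identical: reduce to ruling out $n_{1}=2k+1$ via the nonnegativity of the right side of (\ref{quadcstrong}). Part (c) is the same direct substitution into (\ref{mnk}). In part (b) the paper phrases the augmentation as repeating $2h$ times the step ``add one edge row, one edge column, and one corner queen,'' whereas you add a full frame (two rows, two columns) and two opposite-corner queens per layer; these are equivalent descriptions, and your verification that the two corner queens cover the new frame is exactly the needed check. Your worry about coordinate parity in (b) is unnecessary: the augmentation takes place on the ordinary $m\times n$ board, where every square is legal.
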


\begin{proof}
(a): As the number $n_{1}-2k-1$ of auxiliary row indices is nonnegative, $n_{1} \geq 2k+1$.
If $n_{1} = 2k+1$ then $g = m_{1}$ by (\ref{mnk}), and then the fact that the right side of (\ref{quadcstrong})
is nonnegative implies $n_{1}=1$ and $k=0$, the situation of Proposition \ref{n1equal1}.  Thus for $k \geq 1$
we have $n_{1} \geq 2k+2$, which by (\ref{mnk}) is equivalent to $|D| \geq n/2$.

(b): Suppose for some integer $h>0$ that both $k$ and $k-h$ are feasible for $(m_{1}, n_{1})$. Then
the triple $m_{1}, n_{1}, k$ gives a dominating set $D$ of size $g$ on \q{m}{n}, where $m, n, g$ are
determined by (\ref{mnk}), and similarly the triple $m_{1}, n_{1}, k-h$ gives a dominating set $D'$ of
size $g+2h$ on \q{(m+2h)}{(n+2h)}. However, by repeating $2h$ times the process of adding an edge row
 and edge column to the board and the new corner  square to the dominating set, we can construct from $D$ a dominating
set  of  \q{(m+2h)}{(n+2h)}\ of the same size as $D'$.

(c): Using (\ref{mnk}), if $m_{1}, n_{1}, k$ gives a dominating set of size $g$ for \qq, then
$m_{1}, n_{1}+2, k+1$ gives a dominating set of size $g$ for \q{(m+2)}{n}.
\end{proof}

\begin{exam}  \label{13by16}  A centrally strong set implying  $\gamma(\q{13}{16}) \leq 8$.

Let $m_{1} = 7$ and $n_{1} = 4$, and $k=1$.  Then a strict centrally strong set $D$ is to have one auxiliary
row index, which from $\sum_{(x, y) \in D} y = 0$ must be zero, and two auxiliary column indices, say
$c_{1}, c_{2}$, each in $\{ -3, -1, 1, 3 \}$. From $\sum_{(x, y) \in D} x = 0$ we see $c_{2} = -c_{1}$ and
from (\ref{quadcstrong}) we have $c_{1}^{2} + c_{2}^{2} = 18$, so we can take $c_{1} = 3$ and $c_{2} = -3$.
We then easily obtain $D = \{ \pm(1, -6), \pm(3, 4), \pm(3, 0), \pm(3, -2) \}$;
see solution \href{https://www.combinatorics.org/files/v26i4p45/13x16_8Q.html\#Solution23}{\#23} for $\q{13}{16}$.
(Recall that board squares have edge length two,  column indices are even integers, and row indices
are odd integers here.) Using (\ref{mnk}) this gives $\gamma(\q{13}{16}) \leq 8$ (and equality holds by
our computer search).
\end{exam}

We give two inf{\kern0pt}inite families of strict centrally strong sets,
each including a minimum dominating set found by von Szily
\cite{V1,V2}.

\begin{exam} Strict centrally strong sets for $n_{1} = 5, k = 1$ and odd $m_{1} \geq 5$, and for $n_{1} = 7, k=2$ and odd $m_{1} \geq 7$.

In our approach described above, all orthogonal indices would be even here; we have divided by two, thus
returning to a board with squares of edge length one.

For $n_{1} = 5, k = 1, \mbox{ and } m_{1} \equiv 1 \beem{4}$, $D$ consists of $\pm(-1, \frac{m_{1}-1}{2})$,
$(0, 0)$, and $\pm(0, 2i)$ and $\pm(2, \frac{m_{1} + 5}{2} - 4i)$ for $1 \leq i \leq \frac{m_{1} - 1}{4}$.
With $m_{1} = 5$, this gives a minimum dominating set of \q{13}{13}\ found by
von Szily \cite{V1};
see also solution \href{https://www.combinatorics.org/files/v26i4p45/13x13_7Q.html\#Solution41}{\#41}.

For $n_{1} = 5, k = 1, \mbox{ and } m_{1} \equiv -1 \beem{4}$, $D$ consists of $\pm(\pm 1, \frac{m_{1}-1}{2})$, $\pm(-1, \frac{m_{1}-3}{2})$, $(0, 0)$, $\pm(0, 2i)$ for $1 \leq i \leq \frac{m_{1} - 7}{4}$, and $\pm(2, \frac{m_{1} + 3}{2} - 4i)$ for $1 \leq i \leq \frac{m_{1} - 3}{4}$.

These sets show that for $i \geq 3$, if $2i-1 \leq m \leq 2i+7$ and $5 \leq n \leq 4i+1$, then $\gamma(\q{m}{n}) \leq 2i+1$. \vspace{.1in}

Now let $n_{1} = 7$.

For $m_{1} = 7$, let $D = \{ i(1, 2) + j(2, -1) \mbox{ : } -1 \leq i, j \leq 1 \}$. This gives a minimum dominating
set of \q{17}{17}\ found by von Szily \cite{V2}; see also solution
 \href{https://www.combinatorics.org/files/v26i4p45/17x17_9Q.html\#Solution21}{\#21}.

For $m_{1} = 9$, let $D = \{ (0, 0), \pm(1, 4), \pm(2, -3), \pm(1, 2) + j(2, -1) \mbox{ : } -1 \leq j \leq 1 \}$. This
gives $\gamma(\q{19}{21}) \leq 11$, which is the best we know.

The following complicated description of a placement is the result of unifying four cases depending on the
residue of $m_{1}$ modulo 8. Any odd $m_{1} \geq 11$ has a unique expression
$m_{1} = 11 + 2(l_{1} + l_{2})$ with $l_{1}$ an integer and either $l_{2} = l_{1}$ or $l_{2} = l_{1} + 1$.
(Here $l_{1} = \lfloor (m_{1} - 11)/4 \rfloor$ and $l_{2} = \lceil (m_{1} - 11)/4 \rceil$.)

Start with $(0, 0), \pm(1, 2), \pm(2, -3), \pm(3, -1), \pm((-1)^{l_{1}}, -2l_{1}-5), \pm((-1)^{l_{2}+1}, -2l_{2}-4)$.
Add $\pm(2, 4j)$ and $\pm(2, 4j+1)$ for $1 \leq j \leq \lceil l_{2}/2 \rceil$, and add $\pm(2, -4j-2)$ and
$\pm(2, -4j-3)$ for $1 \leq j \leq \lfloor l_{2}/2 \rfloor$. If $l_{2} = l_{1}$ then add $\pm(2, (-1)^{l_{1}}(2l_{1}+4))$.

These sets show that for $i \geq 4$, if $2i-1 \leq m \leq 2i+9$ and $7 \leq n \leq 4i+1$,
then $\gamma(\q{m}{n}) \leq 2i+1$.
\end{exam}

Above we have described two approaches to the construction of dominating sets.
In both, once a set of lines to be occupied by the dominating set is specif{\kern0pt}ied,
it is necessary to see whether one can f{\kern0pt}ind such a dominating  set.  A fast backtrack search idea of Hitotumatu and Noshita \cite{HN},
explained and amplif{\kern0pt}ied by Knuth \cite{KN}, was used by \"{O}sterg{\aa}rd and Weakley \cite{OW} to f{\kern0pt}ind
values and bounds of $\gamma(\q{n}{n})$ up to $n = 120$. This approach and also the algorithm of
Neuhaus \cite{NE} can be applied to rectangular boards as well.  But as mentioned, neither of our constructions
can produce a dominating set of size less than $\lfloor n/2 \rfloor$ for $Q_{m \times n}$ (with $m \leq n$).  Thus a
resolution of Question \ref{whatlowerbound} would be needed to determine whether extensive search based on these
constructions is useful.

The complexity of computing minimum dominating set of queens is
another open question \cite[Section 5]{FE}.
Backtracking algorithms, dynamic programming, and treewidth technique
are analyzed extensively by Fernau \cite[Sections 2-4]{FE}.

Applications of backtracking algorithms to a variety of domination
problems are studied in the doctoral dissertation of Bird
\cite{Bird}; in particular, he examines how recursive backtracking
search can be split among multiple processes by partitioning the
search tree.  We give some of his results on queens at the end of
the next section.

\section{Independent domination}

We have calculated the independent domination number $i(Q_{m \times n}), \, 4 \leq m  \leq n \leq  18$,
 as shown in the table. Each table entry is linked to  some   minimum independent
 dominating sets.

In these ranges for $m$ and $n$, monotonicity fails twice:
$i(Q_{8 \times 11}) = 6 > 5 = i(Q_{9 \times 11}) = i(Q_{10 \times 11}) = i(Q_{11 \times 11})$,
and $i(Q_{11 \times 18}) = 9 > 8 = i(Q_{12 \times 18})$.  The f{\kern0pt}irst instance is
essentially the same failure as for $\gamma(Q_{m \times n})$.  The second
is similar in that the only (up to symmetry) independent dominating set of size 8 for $Q_{12 \times 18}$
does not f{\kern0pt}it on $Q_{11 \times 18}$.

From the def{\kern0pt}initions it is clear that $\gamma(Q_{m \times n}) \leq i(Q_{m \times
n})$, and this appears to be an excellent lower bound for $i(Q_{m \times n})$.
In the table, we have highlighted  the entries where
these two numbers are unequal.  We know of no case where
$\gamma(Q_{m \times n}) + 1 < i(Q_{m \times n})$.

\begin{center}
\begin{tabular}{|c||c|c|c|c|c|c|c|c|c|c|c|c|c|c|c|}
\hline
 $n \diagdown m$ & \hspace*{0.1mm} 4 \hspace*{0.1mm}
                 & \hspace*{0.1mm} 5 \hspace*{0.1mm}
                 & \hspace*{0.1mm} 6 \hspace*{0.1mm}
                 & \hspace*{0.1mm} 7 \hspace*{0.1mm}
                 & \hspace*{0.1mm} 8 \hspace*{0.1mm}
                 & \hspace*{0.1mm} 9 \hspace*{0.1mm} & 10 & 11 & 12 & 13 & 14 & 15 & 16 & 17 & 18  \\
\hline
\hline
               4 &  \cellcolor{lightgray} \href{https://www.combinatorics.org/files/v26i4p45/04x04_3Qi.html}{\emph{3}} & & & & & & & & & & & & & &   \\
\hline
               5 &  \cellcolor{lightgray} \href{https://www.combinatorics.org/files/v26i4p45/04x05_3Qi.html}{3} &
                    \href{https://www.combinatorics.org/files/v26i4p45/05x05_3Qi.html}{\emph{3}} & & & & & & & & & & & & &  \\
\hline
               6 &  \href{https://www.combinatorics.org/files/v26i4p45/04x06_3Qi.html}{3} &
                    \href{https://www.combinatorics.org/files/v26i4p45/05x06_3Qi.html}{3} &
                    \cellcolor{lightgray} \href{https://www.combinatorics.org/files/v26i4p45/06x06_4Qi.html}{\emph{4}} & & & & & & & & & & & & \\
\hline
               7 &  \href{https://www.combinatorics.org/files/v26i4p45/04x07_3Qi.html}{3} &
                    \cellcolor{lightgray} \href{https://www.combinatorics.org/files/v26i4p45/05x07_4Qi.html}{4} &
                    \href{https://www.combinatorics.org/files/v26i4p45/06x07_4Qi.html}{4} &
                    \href{https://www.combinatorics.org/files/v26i4p45/07x07_4Qi.html}{\emph{4}} & & & & & & & & & & & \\
\hline
               8 & \cellcolor{lightgray}  \href{https://www.combinatorics.org/files/v26i4p45/04x08_4Qi.html}{4} &
                    \href{https://www.combinatorics.org/files/v26i4p45/05x08_4Qi.html}{4} &
                    \href{https://www.combinatorics.org/files/v26i4p45/06x08_4Qi.html}{4} &
                    \href{https://www.combinatorics.org/files/v26i4p45/07x08_5Qi.html}{5} &
                    \href{https://www.combinatorics.org/files/v26i4p45/08x08_5Qi.html}{\emph{5}} & & & & & & & & & & \\
\hline
               9 &  {4} &
                    \href{https://www.combinatorics.org/files/v26i4p45/05x09_4Qi.html}{4} &
                    \href{https://www.combinatorics.org/files/v26i4p45/06x09_4Qi.html}{4} &
                    \href{https://www.combinatorics.org/files/v26i4p45/07x09_5Qi.html}{5} &
                    \href{https://www.combinatorics.org/files/v26i4p45/08x09_5Qi.html}{5} &
                    \href{https://www.combinatorics.org/files/v26i4p45/09x09_5Qi.html}{\emph{5}}  & & & & & & & & & \\
\hline
              10 &  {4} &
                    \href{https://www.combinatorics.org/files/v26i4p45/05x10_4Qi.html}{4} &
                    \href{https://www.combinatorics.org/files/v26i4p45/06x10_4Qi.html}{4} &
                    \href{https://www.combinatorics.org/files/v26i4p45/07x10_5Qi.html}{5} &
                    \href{https://www.combinatorics.org/files/v26i4p45/08x10_5Qi.html}{5} &
                    \href{https://www.combinatorics.org/files/v26i4p45/09x10_5Qi.html}{5} &
                    \href{https://www.combinatorics.org/files/v26i4p45/10x10_5Qi.html}{\emph{5}}  & & & & & & & & \\
\hline
              11 &  {4} &
                    \href{https://www.combinatorics.org/files/v26i4p45/05x11_4Qi.html}{4} &
                    \href{https://www.combinatorics.org/files/v26i4p45/06x11_5Qi.html}{5} &
                    \href{https://www.combinatorics.org/files/v26i4p45/07x11_5Qi.html}{5} &
                    \href{https://www.combinatorics.org/files/v26i4p45/08x11_6Qi.html}{\textbf{6}} &
                    \href{https://www.combinatorics.org/files/v26i4p45/09x11_5Qi.html}{5} &
                    \href{https://www.combinatorics.org/files/v26i4p45/10x11_5Qi.html}{5} &
                    \href{https://www.combinatorics.org/files/v26i4p45/11x11_5Qi.html}{\emph{5}}   & & & & & &  &\\
\hline
              12 &  {4} &
                    \href{https://www.combinatorics.org/files/v26i4p45/05x12_4Qi.html}{4} &
                    \href{https://www.combinatorics.org/files/v26i4p45/06x12_5Qi.html}{5} &
                    \href{https://www.combinatorics.org/files/v26i4p45/07x12_5Qi.html}{5} &
                    \href{https://www.combinatorics.org/files/v26i4p45/08x12_6Qi.html}{6}      &
                    \href{https://www.combinatorics.org/files/v26i4p45/09x12_6Qi.html}{6} &
                    \href{https://www.combinatorics.org/files/v26i4p45/10x12_6Qi.html}{6}  &
                    \href{https://www.combinatorics.org/files/v26i4p45/11x12_6Qi.html}{6}  &
                    \cellcolor{lightgray} \href{https://www.combinatorics.org/files/v26i4p45/12x12_7Qi.html}{\emph{7}}  & & & & & &  \\
\hline
              13 &  4 &
                    {5}  &
                    \href{https://www.combinatorics.org/files/v26i4p45/06x13_5Qi.html}{5} &
                    \href{https://www.combinatorics.org/files/v26i4p45/07x13_6Qi.html}{6}  &
                    \href{https://www.combinatorics.org/files/v26i4p45/08x13_6Qi.html}{6} &
                    \href{https://www.combinatorics.org/files/v26i4p45/09x13_6Qi.html}{6} &
                    \href{https://www.combinatorics.org/files/v26i4p45/10x13_7Qi.html}{7}  &
                    \href{https://www.combinatorics.org/files/v26i4p45/11x13_7Qi.html}{7}  &
                    \href{https://www.combinatorics.org/files/v26i4p45/12x13_7Qi.html}{7}   &
                    \href{https://www.combinatorics.org/files/v26i4p45/13x13_7Qi.html}{\emph{7}} & & & &  &  \\
\hline
              14 &  4 &
                    {5} &
                    {6} &
                    \href{https://www.combinatorics.org/files/v26i4p45/07x14_6Qi.html}{6}  &
                    \href{https://www.combinatorics.org/files/v26i4p45/08x14_6Qi.html}{6} &
                    \href{https://www.combinatorics.org/files/v26i4p45/09x14_6Qi.html}{6} &
                    \href{https://www.combinatorics.org/files/v26i4p45/10x14_7Qi.html}{7}  &
                    \href{https://www.combinatorics.org/files/v26i4p45/11x14_7Qi.html}{7}   &
                    \href{https://www.combinatorics.org/files/v26i4p45/12x14_8Qi.html}{8}    &
                    \href{https://www.combinatorics.org/files/v26i4p45/13x14_8Qi.html}{8}   &
                    \href{https://www.combinatorics.org/files/v26i4p45/14x14_8Qi.html}{\emph{8}} & & &  &             \\
\hline
              15 &  4 &
                    {5} &
                    {6} &
                    \href{https://www.combinatorics.org/files/v26i4p45/07x15_6Qi.html}{6} &
                    \cellcolor{lightgray} \href{https://www.combinatorics.org/files/v26i4p45/08x15_7Qi.html}{7} &
                    \href{https://www.combinatorics.org/files/v26i4p45/09x15_7Qi.html}{7} &
                    \href{https://www.combinatorics.org/files/v26i4p45/10x15_7Qi.html}{7} &
                    \href{https://www.combinatorics.org/files/v26i4p45/11x15_7Qi.html}{7} &
                    \href{https://www.combinatorics.org/files/v26i4p45/12x15_8Qi.html}{8} &
                    \href{https://www.combinatorics.org/files/v26i4p45/13x15_8Qi.html}{8} &
                    \cellcolor{lightgray} \href{https://www.combinatorics.org/files/v26i4p45/14x15_9Qi.html}{9} &
                    \href{https://www.combinatorics.org/files/v26i4p45/15x15_9Qi.html}{\emph{9}} & & &             \\
\hline
              16 &  4 &
                    5 &
                    {6} &
                    \href{https://www.combinatorics.org/files/v26i4p45/07x16_6Qi.html}{6} &
                    \href{https://www.combinatorics.org/files/v26i4p45/08x16_7Qi.html}{7} &
                    \href{https://www.combinatorics.org/files/v26i4p45/09x16_7Qi.html}{7} &
                    \href{https://www.combinatorics.org/files/v26i4p45/10x16_7Qi.html}{7} &
                    \href{https://www.combinatorics.org/files/v26i4p45/11x16_8Qi.html}{8} &
                    \href{https://www.combinatorics.org/files/v26i4p45/12x16_8Qi.html}{8} &
                    \href{https://www.combinatorics.org/files/v26i4p45/13x16_8Qi.html}{8} &
                    \href{https://www.combinatorics.org/files/v26i4p45/14x16_9Qi.html}{9} &
                    \href{https://www.combinatorics.org/files/v26i4p45/15x16_9Qi.html}{9} &
                    \href{https://www.combinatorics.org/files/v26i4p45/16x16_9Qi.html}{\emph{9}} &  &           \\
\hline
              17 &  4 &
                    5 &
                    6 &
                    {7} &
                    \href{https://www.combinatorics.org/files/v26i4p45/08x17_7Qi.html}{7} &
                    \href{https://www.combinatorics.org/files/v26i4p45/09x17_7Qi.html}{7}  &
                    \href{https://www.combinatorics.org/files/v26i4p45/10x17_8Qi.html}{8} &
                    \href{https://www.combinatorics.org/files/v26i4p45/11x17_8Qi.html}{8} &
                    \href{https://www.combinatorics.org/files/v26i4p45/12x17_8Qi.html}{8} &
                    \href{https://www.combinatorics.org/files/v26i4p45/13x17_9Qi.html}{9} &
                    \href{https://www.combinatorics.org/files/v26i4p45/14x17_9Qi.html}{9} &
                    \href{https://www.combinatorics.org/files/v26i4p45/15x17_9Qi.html}{9} &
                    \href{https://www.combinatorics.org/files/v26i4p45/16x17_9Qi.html}{9} &
                    \href{https://www.combinatorics.org/files/v26i4p45/17x17_9Qi.html}{\emph{9}}  &            \\
\hline
              18 &  4 &
                    5 &
                    6 &
                    {7} &
                    \href{https://www.combinatorics.org/files/v26i4p45/08x18_7Qi.html}{7} &
                    \href{https://www.combinatorics.org/files/v26i4p45/09x18_8Qi.html}{8} &
                    \href{https://www.combinatorics.org/files/v26i4p45/10x18_8Qi.html}{8} &
                    \cellcolor{lightgray} \href{https://www.combinatorics.org/files/v26i4p45/11x18_9Qi.html}{\textbf{9}} &
                    \href{https://www.combinatorics.org/files/v26i4p45/12x18_8Qi.html}{8} &
                    \href{https://www.combinatorics.org/files/v26i4p45/13x18_9Qi.html}{9} &
                    \href{https://www.combinatorics.org/files/v26i4p45/14x18_9Qi.html}{9} &
                    \href{https://www.combinatorics.org/files/v26i4p45/15x18_9Qi.html}{9} &
                    \cellcolor{lightgray} \href{https://www.combinatorics.org/files/v26i4p45/16x18_10Qi.html}{10} &
                    \cellcolor{lightgray} \href{https://www.combinatorics.org/files/v26i4p45/17x18_10Qi.html}{10} &
                    \cellcolor{lightgray} \href{https://www.combinatorics.org/files/v26i4p45/18x18_10Qi.html}{\emph{10}}    \\

\hline
\end{tabular} \\[1mm]
Table 2: Values of independent domination number $i(Q_{m \times n}), \, 4 \leq m  \leq n \leq  18$ (\href{http://oeis.org/A299029}{OEIS A299029}).
Highlighted cells indicate where $\gamma \neq i$.
\end{center}

 Bird \cite[Chapter 5]{Bird} reports the new values
$i(Q_{n \times n}) = \gamma(Q_{n \times n}) = (n/2) + 1$ for $n = 20, 22, 24$,
$i(Q_{19 \times 19}) = i(Q_{21 \times 21}) = 11$, and $i(Q_{23 \times 23}) = 13$.
He also gives the number of minimum dominating sets and the number of minimum
independent dominating sets, up to equivalence, for $Q_{n \times n}$ up to $n = 18$.

\section*{Acknowledgements}
The authors thank the anonymous reviewers for their substantial comments.
Research of the f{\kern0pt}irst author was supported in part by the
Hungarian Scientif{\kern0pt}ic Research Fund (OTKA), grant no.~K111797,
 and by the J\'anos Bolyai Research Fellowship of the
Hungarian Academy of Sciences, grant no.~BO/00154/16,
 and by the \'{U}NKP-18-4-BCE-90 Bolyai+ New National Excellence Program of the Ministry of Human Capacities, Hungary.

Parts of the computations were carried out on NIIF Supercomputer based in Hungary
at NIIF National Information Infrastructure Development Institute.

\end{document}